\documentclass[12pt]{amsart}
\usepackage{Baskervaldx}
\usepackage[cal=cm]{mathalfa}
\usepackage{enumerate, geometry, pgfplots, graphicx, subcaption, amsmath, array, pdfsync, tikz, tikz-cd, color, url, float, wrapfig, comment}
\usepackage[baskervaldx,cmintegrals,bigdelims,vvarbb]{newtxmath}

\usepackage[hyperfootnotes=false, pdfencoding=auto]{hyperref}
\hypersetup{
  colorlinks= true,
  linkcolor=[RGB]{0,89,42},
  urlcolor=[RGB]{0,89,42},
  citecolor=[RGB]{0,89,42}
}

\usepackage[font=footnotesize]{caption}

\newcommand\david[1]{{\color{orange} \sf $\Delta$ David: [#1]}}

\usetikzlibrary{calc}

\linespread{1.1}
\geometry{
 right=1in,
 left=1in,
 top=1in,
 bottom=1in
 }

\setlength{\parskip}{5pt}

\numberwithin{equation}{section}

\title{Rational endomorphisms of Fano hypersurfaces}
\author{Nathan Chen and David Stapleton}

\definecolor{mycolor}{RGB}{146, 214, 203}
\definecolor{myothercolor}{RGB}{179, 215, 232}

\newtheorem{theorem}{Theorem}
\newtheorem{corollary}[theorem]{Corollary}
\newtheorem{lemma}[theorem]{Lemma}
\newtheorem{proposition}[theorem]{Proposition}

\numberwithin{theorem}{section}

\newcommand\blfootnote[1]{%
  \begingroup
  \renewcommand\thefootnote{}\footnote{#1}%
  \addtocounter{footnote}{-1}%
  \endgroup
}

\newtheorem{Lthm}{Theorem}

\newtheorem{Lcor}[Lthm]{Corollary}

\theoremstyle{definition}

\newcommand{\hr}[2]{\hyperref[#1]{#2}}

\theoremstyle{definition}
\newtheorem{remark}[theorem]{Remark}

\newtheorem{example}[theorem]{Example}
\newtheorem{definition}[theorem]{Definition}

\def\ZZ{{\mathbb Z}}
\def\QQ{{\mathbb Q}}

\def\AA{{\mathbb A}}

\def\CC{{\mathbb C}}
\def\LL{{\mathbb L}}
\def\TT{{\mathbb{T}}}

\def\PP{{\mathbb{P}}}

\def\Pic{{\mathrm{Pic}}}
\def\Cl{{\mathrm{Cl}}}
\def\Spec{{\mathrm{Spec}}}

\def\Oc{{\mathcal{O}}}
\def\Mc{{\mathcal{M}}}
\def\Lc{{\mathcal{L}}}

\def\Nc{{\mathcal{N}}}

\def\Gm{{\mathbb{G}_m}}
\def\zhens{{{\ZZ}_p^{sh}}}
\def\zhenstwo{{\ZZ_2^{sh}}}
\def\qhens{{\QQ_p^{sh}}}

\def\ord{{\mathrm{ord}}}
\def\wb{{\mathrm{WB}}}

\def\wbhat{{\widehat{\wb}}}
\def\nuhat{{\hat{\nu}}}
\def\xbar{{\overline{x}}}
\def\xs{{x_1,\ldots,x_n}}
\def\Fbar{{\overline{\mathbb{F}}_p}}
\def\Fbartwo{{\overline{\mathbb{F}}_2}}

\def\mf{{\mathfrak{m}}}
\def\RatEnd{{\mathrm{RatEnd}}}
\def\Bir{{\mathrm{Bir}}}

\def\dra{{\dashrightarrow}}
\def\ra{{\rightarrow}}
\def\etabar{{\overline{\eta}}}
\def\kapbar{{\overline{\kappa}}}
\def\gammabar{{\overline{\Gamma}}}
\def\Xbar{{\overline{X}}}
\def\Tbar{{\overline{\TT}}}
\def\cl{{\colon}}

\def\Frac{{\mathrm{Frac}}}

\pgfplotsset{compat=1.15}

\newcommand{\mybigwedge}{\raisebox{.3ex}{\scalebox{0.75}{$\bigwedge$}}}

\begin{document}

\maketitle

\thispagestyle{empty}

\begin{abstract}
We show that the degrees of rational endomorphisms of very general complex Fano and Calabi-Yau hypersurfaces satisfy certain congruence conditions by specializing to characteristic p. As a corollary we show that very general $n-$dimensional hypersurfaces of degree $d\ge \lceil5(n+3)/6\rceil$ are not birational to elliptic fibrations. A key part of the argument is to resolve singularities of general p-cyclic covers in mixed characteristic p.\\
MSC: 14E05, 14E08.
\end{abstract}

The goal of this paper is to use specialization to characteristic $p$ techniques to give restrictions on the degrees of rational endomorphisms of complex Fano hypersurfaces. \blfootnote{The first author's research is partially supported by the National Science Foundation under the Stony Brook/Simons Center for Geometry and Physics RTG grant DMS-1547145.\\
\vspace{-12pt}

The second author's research is partially supported by the NSF FRG grant number 1952399.}

Koll\'ar \cite{Kollar96} showed that Fano varieties that are $p$-fold cyclic covers in characteristic $p$ can carry holomorphic forms. The positivity of these forms is a powerful tool for studying these varieties, and by specializing to characteristic $p$ one can prove results about Fano varieties in characteristic 0. Let $X\subset \PP^{n+1}_\CC$ be a very general hypersurface of degree $d$. Koll\'ar used specialization mod $p$ to prove that if $d\ge 2\lceil (n+3)/3 \rceil$ then $X$ is not rational (or even ruled). If $d\ge 2\lceil (n+2)/3 \rceil$ then Totaro \cite{Totaro} used specialization mod $p$ to show that $X$ is not even stably rational. In \cite{CS20}, the authors used specialization mod $p$ to prove that the degree of irrationality for Fano hypersurfaces can be arbitrarily large.

Let $\RatEnd(X)$ denote the set of rational endomorphisms of $X$ with degree at least 1, and let $\Bir(X)\subset \RatEnd(X)$ denote the subset of birational automorphisms. If $X$ is of general type then it follows from \cite{KobOch75} that $\Bir(X)=\RatEnd(X)$ (in other words, there are no rational endomorphisms of degree $\ge$ 2). Thus, studying degree $\ge 2$ endomorphisms on smooth hypersurfaces is only interesting in the Fano or Calabi-Yau range. On the other hand, every rational variety admits rational endomorphisms of every degrees, so one can view our main result as strengthening the nonrationality of the varieties involved.

\begin{Lthm}\label{mainThm}
Let $X \subset \PP^{n+1}$ be a very general hypersurface over $\CC$ of degree $d$ and dimension $n\ge 3$. Let $p$ be a prime number such that
\[ d\ge p \left\lceil \frac{n+3}{p+1} \right\rceil. \]
If $\phi$ is any rational endomorphism of $X$ of degree $\lambda$ then $\lambda \equiv 0 \text{ or } 1 \pmod{p}.$
\end{Lthm}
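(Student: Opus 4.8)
The plan is to argue by specialization to characteristic $p$, converting a hypothetical endomorphism of forbidden degree into a contradiction with the positivity of Kollár's differential forms. First I would set up the reduction. Since ``very general'' means outside a countable union of proper closed subsets of the parameter space of degree-$d$ forms, it suffices to show that for each residue $\lambda\not\equiv 0,1\pmod p$ the locus $B_\lambda$ of hypersurfaces admitting a degree-$\lambda$ rational endomorphism is not dense. Supposing instead that a very general $X/\CC$ carries $\phi\in\RatEnd(X)$ of degree $\lambda\not\equiv 0,1$, I would spread $(X,\phi)$ out over a finitely generated $\ZZ$-subalgebra and reduce at a suitable prime over $p$, producing a hypersurface $X_0$ over $\Fbar$ with a rational endomorphism $\phi_0$ of degree $\lambda$. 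Because Kollár's construction applies to a dense set of hypersurfaces in characteristic $p$, a standard spreading-out argument lets me assume $X_0$ is of Kollár's type (if $B_\lambda$ were dense, its reduction would meet the Kollár locus). The one delicate point here is that the degree $\lambda$ is preserved under specialization: I would control this by spreading out a resolution of the graph $\Gamma_\phi$ and invoking properness and flatness, so that the generic degree equals the special degree.

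The positivity input is the one furnished by Koll\'ar \cite{Kollar96} and refined in \cite{Totaro, FanoIrrat}: under $d\ge p\lceil(n+3)/(p+1)\rceil$, a resolution $\widetilde X\to X_0$ carries a big line bundle $\mathcal L$ together with an injection $\mathcal L\hookrightarrow \Omega^n_{\widetilde X}=\omega_{\widetilde X}$, and this structure arises from an underlying purely inseparable degree-$p$ cover (equivalently, a $p$-closed foliation) intrinsic to the birational class. I would extract two consequences. First, I would establish that the distinguished Kollár form $\eta\in H^0(\widetilde X,\omega_{\widetilde X}\otimes\mathcal L^{-1})$ is essentially unique up to scalar, so its zero divisor $\mathrm{div}(\eta)$ is a canonical effective divisor whose nonreduced part records the ramification of the cover with multiplicity $p-1\equiv -1\pmod p$. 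Second, pullback of $n$-forms vanishes along relative Frobenius, which is precisely why the inseparable case below cannot be detected by forms and must be handled by divisibility instead.

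With these in hand I split on the separable degree. Writing $\lambda=\lambda_s\,p^e$ with $\lambda_s$ prime to $p$, the case $e\ge 1$ is immediate, as then $p\mid\lambda$ and $\lambda\equiv 0$. So I may assume $\phi_0$ is separable of degree $\lambda=\lambda_s$ prime to $p$, and the goal becomes $\lambda\equiv 1\pmod p$. Resolving $\phi_0$ to a generically finite morphism $g\colon\widetilde X'\to\widetilde X$ of degree $\lambda$ over a common modification $\rho\colon\widetilde X'\to\widetilde X$, I would pull the Kollár form back along both $g$ and $\rho$. By the uniqueness above, $g^*\eta$ and $\rho^*\eta$ are proportional sections of the same line bundle on $\widetilde X'$; equating their divisors and separating the tame ramification term $R_g$ of $g$ (tame since $p\nmid\lambda$) from the pulled-back ramification of the cover yields a linear relation among divisor classes. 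Intersecting this relation with an appropriate $(n-1)$-fold product of the big class $\mathcal L$ and reducing modulo $p$---where the cover contributes through the factor $p-1\equiv-1$ and the degree-$\lambda$ pullback contributes the factor $\lambda$---I expect a congruence of the shape $(\lambda-1)\cdot(\mathcal L^{n})\equiv 0\pmod p$ with $(\mathcal L^{n})\not\equiv 0$, forcing $\lambda\equiv 1\pmod p$.

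The step I expect to be the main obstacle is this final one: pinning down the mod-$p$ bookkeeping. It requires genuine canonicity and uniqueness of the Kollár form and of its ramification divisor, so that $g^*\eta$ and $\rho^*\eta$ are truly comparable, together with a clean accounting---via the Cartier operator---of how the different of the inseparable cover interacts with the tame ramification $R_g$ under pullback. Controlling these multiplicities \emph{modulo $p$}, rather than merely up to the inequalities that suffice for nonrationality, is the crux, and is where the restriction to $\lambda\equiv 0,1$, the two idempotents modulo $p$, should emerge.
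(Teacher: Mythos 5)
Your overall shape (specialize to characteristic $p$, split off the inseparable case since $p\mid\lambda$ there, and use Koll\'ar's positive forms to constrain the separable case) matches the paper's strategy, but two of your load-bearing steps have genuine gaps. First, the specialization step: it is \emph{not} true that ``the generic degree equals the special degree'' by properness and flatness. Flatness only gives that the total degree of the second projection on the central fiber of the (normalized closure of the) graph equals $\lambda$; that central fiber generally breaks into the component birational to the special fiber \emph{plus} exceptional divisors, each of which contributes to $\lambda$. Controlling those contributions is the main technical content of the paper: one must degenerate the hypersurface (via Mori's family) to a $p$-cyclic cover $Y$, spread $Y$ out over $\zhens$, and prove that this mixed-characteristic total space has ``sustained separably uniruled modifications'' (Theorem~\ref{thm:SusRuledMod}, proved by an explicit alternating sequence of weighted blowups, and stable under arbitrary finite base change since $\phi$ is only defined over a finite extension). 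Only then, because $Y_{\Fbar}$ is not separably uniruled while every exceptional component is, does each extra component map to $Y_{\Fbar}$ with degree divisible by $p$, giving $\deg(\phi_{\kappa})\equiv\lambda\pmod p$ (Theorem~\ref{SpecializeRatEnd}). Note also that the objects carrying Koll\'ar's forms in characteristic $p$ are the (singular) cyclic covers, not smooth hypersurfaces of degree $d$; reducing $X$ itself mod $p$ and ``assuming $X_0$ is of Koll\'ar's type'' conflates two different degenerations, and the case $p\nmid d$ needs the additional degeneration to a union of a degree-$pe$ hypersurface with hyperplanes.

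Second, your characteristic-$p$ endgame is both different from the paper's and, as sketched, unlikely to close. The paper proves the stronger statement that the cyclic cover admits \emph{no} separable rational endomorphism of degree $\ge 2$ (Proposition~\ref{prop:NoSep}): since $\Pic(Y)\otimes\QQ\cong\QQ$, the Khovanskii--Teissier inequality forces $(\phi^{\circ k})^{*}$ to act by multiplication by at least $\lambda^{k/n}$, so pulling back the Koll\'ar bundle $\Mc\hookrightarrow\Omega^{n-1}_{Z}$ (an $(n-1)$-form, not $\omega_{\widetilde X}$) under iterates produces sub-line-bundles of $\mybigwedge^{n-1}\Omega_Z$ of unbounded degree, contradicting a boundedness estimate. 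Your proposed alternative --- comparing $g^{*}\eta$ with $\rho^{*}\eta$ and extracting $(\lambda-1)\cdot(\mathcal L^{n})\equiv 0\pmod p$ --- founders on concrete points: the two pullbacks are sections of non-isomorphic line bundles (their classes differ by roughly a factor $\lambda^{1/n}$ in $\Pic\otimes\QQ$), the uniqueness of $\eta$ up to scalar is neither proved nor needed in the paper, and most decisively $(\Mc^{n})$ is itself divisible by $p$ because $\Mc$ is pulled back along the degree-$p$ cover $Y\to X$, so your final congruence is vacuous. If you want to salvage a form-theoretic congruence argument you would need a different intersection class; the iteration/boundedness argument avoids all of this mod-$p$ bookkeeping.
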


\noindent Very little is known about rational endomorphisms of nonrational varieties. Dedieu (\cite{Dedieu09}) has given constraints on degrees of rational endomorphisms of K3 surfaces. Our results are the first constraints on rational endomorphisms of nonrational Fano hypersurfaces.

As above, let $X\subset\PP_\CC^{n+1}$ be very general of degree $d$ and dimension $n$ with a rational endomorphism of degree $\lambda$. In low dimensions, we have the following constraints on $\lambda$:
\begin{center}
\begin{tabular}{c | c | c | c | c} 
 ($n,d$) & (3,5) & (4,6) & (5,6) & (5,7) \\
 \hline
 type & Calabi-Yau & Calabi-Yau & Fano & Calabi-Yau \\
 \hline
 $\lambda \equiv 0 \text{ or }1$ & $\pmod 5$ & $\pmod 3$ & $\pmod 3$  & $\pmod 3$ and$\pmod 7$ \\ 
\end{tabular}
\end{center}

Unirational varieties are sources of varieties with many rational endomorphisms. If $X$ is unirational, precomposing the unirational parametrization with any map $X\dra \PP^n$ gives
a rational endomorphism:
\[
X\dra \PP^n\dra X.
\]
For example, any smooth cubic hypersurface of dimension $\ge 2$ is unirational, and therefore admits many rational endomorphisms. Moreover, Beheshti and Riedl \cite{BR19} proved that if $X\subset\PP^{n+1}_\CC$ is smooth and $n\ge 2^{d!}$ then $X$ is unirational. So for any $d>0$ there exist degree smooth $d$ hypersurfaces of large dimension with many rational endomorphisms.  On the other hand, any rational endomorphism constructed this way will have degree divisible by the degree of the unirational parametrization. Koll\'ar showed (by specializing mod $p$) that if $X\subset \PP^{n+1}_\CC$ is very general of degree $d\ge p\lceil (n+3)/(p+1) \rceil$ then any unirational parametrization (in fact, any parametrization by a uniruled variety) has degree divisible by $p$. Indeed, it is natural to conjecture there are Fano hypersurfaces with no rational endomorphisms of degree $\ge2$. Such a variety would provide an answer to a famous open problem in that it would be an example of a rationally connected variety which is not unirational.

Another source of varieties with rational endomorphisms are varieties that birationally admit \textit{elliptic fibrations}, i.e. a variety $X$ with a rational map:
\[
X\dra B
\]
such that $X/\CC(B)$ is birational to a $\CC(B)$-elliptic curve $E_{\CC(B)}$ (with a $\CC(B)$-point). For example, any cubic hypersurface is birational to an elliptic fibration. Note that for any integer $m$, an elliptic curve $E_{\CC(B)}$ has a multiplication by $n$ map:
\[
m\cl E_{\CC(B)}\ra E_{\CC(B)},
\]
which has degree $n^{2}$. If $p$ is a prime and $n^2\equiv 0$ or 1$\pmod{p}$ for all $n \geq 2$, then $p$ is equal to 2 or 3. Thus the degree restrictions in Theorem~\ref{mainThm} are enough to prove the following:

\begin{Lcor}
Let $X\subset \PP^{n+1}_\CC$ be very general of degree $d$. If $d\ge 5 \lceil (n+3)/6 \rceil$ then $X$ is not birational to an elliptic fibration.
\end{Lcor}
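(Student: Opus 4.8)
The plan is to apply Theorem~\ref{mainThm} with the prime $p=5$ and then contradict it using the multiplication maps on an elliptic fibration. First I would observe that the hypothesis $d\ge 5\lceil (n+3)/6\rceil$ is exactly the hypothesis of Theorem~\ref{mainThm} for $p=5$, since $\lceil (n+3)/6\rceil=\lceil (n+3)/(p+1)\rceil$ when $p=5$. Hence for a very general such $X$ (with $n\ge 3$), every $\phi\in\RatEnd(X)$ has degree $\lambda\equiv 0$ or $1 \pmod 5$.

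Next, I would argue by contradiction: suppose $X$ is birational to an elliptic fibration $\pi\cl \mathcal{E}\dra B$, so that the generic fiber is an elliptic curve $E_{\CC(B)}$ over $K=\CC(B)$ equipped with a $K$-point serving as the identity. For each integer $m\ge 2$ the multiplication-by-$m$ isogeny $[m]\cl E_{\CC(B)}\ra E_{\CC(B)}$ is a dominant $K$-morphism of degree $m^2$. Since it commutes with the structure map to $B$, it extends to a dominant rational self-map of the total space $\mathcal{E}$, and transporting along the birational identification $X\dra \mathcal{E}$ yields a rational endomorphism $\phi_m\in\RatEnd(X)$.

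The one computation to pin down is $\deg\phi_m$. Because $[m]$ is a morphism of the generic fiber fixing the base field $K=\CC(B)$, the induced inclusion of function fields $\CC(X)=\CC(E_{\CC(B)})$ into itself is the identity on $\CC(B)$ and has degree equal to $\deg[m]=m^2$; by multiplicativity of degree in the tower over $\CC(B)$ this gives $\deg\phi_m=m^2$. In particular each $\phi_m$ is genuinely dominant (an isogeny), so $\phi_m$ has degree at least $1$ and lies in $\RatEnd(X)$.

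Finally I would invoke the numerical observation already recorded above: taking $m=2$ produces a rational endomorphism of degree $4$, and $4\equiv 4\pmod 5$ is neither $0$ nor $1$, contradicting Theorem~\ref{mainThm}. Hence no such elliptic fibration exists. I do not expect any serious obstacle; the only step requiring care is confirming that the fiberwise multiplication map has degree $m^2$ as a rational self-map of the \emph{total space} (not merely on a single fiber), which is precisely the function-field degree computation above.
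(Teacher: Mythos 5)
Your proposal is correct and follows the same route as the paper: apply Theorem~\ref{mainThm} with $p=5$ and contradict it with the multiplication-by-$2$ map on the generic fiber of the elliptic fibration, which has degree $4\not\equiv 0,1\pmod 5$. The function-field computation of the degree of the induced self-map of the total space is exactly the point the paper takes for granted, and you have verified it correctly.
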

\noindent For example, we see that a very general Calabi-Yau quintic threefold is not birational to an elliptic fibration. For Calabi-Yau hypersurfaces this was already known by work of Grassi and Wen (see \cite[Thm. 1.8]{Grassi91} and \cite{GrassiWen19}). Similarly, if $X$ is birational to a genus 1 fibration without a section but with a line bundle of relative degree $\delta$ then $X$ admits rational endomorphisms of degree $(m\delta+1)^2$ for all $m\ge 0$. Likewise, if $X$ is birational to a fibration by $g$-dimensional abelian varieties then $X$ admits rational endomorphisms of degree $m^{2g}$ for all $m\ge 0$. Theorem~\ref{mainThm} can be applied to give results in these settings as well.

The idea behind this paper is to spread out rational endomorphisms in characteristic $p$. The main technical difficulty that we encounter is proving that certain cyclic covers in mixed characteristic have ``mild singularities" after allowing for the possibility of an arbitrary finite base change. The technical result that we prove is:

\begin{Lthm}\label{thm:SusRuledMod}
Let $\zhens$ be a strict Henselization of the $p$-adic integers with field of fractions $\qhens$ and residue field $\Fbar$. Let $X$ be a smooth scheme over $\zhens$. Let $D \subset X$ be a divisor which is smooth over $\zhens$ such that $\Oc(D)\cong L^p$ for some line bundle $L$ on $X$. Let $Y$ be the $p$-cyclic cover of $X$ branched at $D$. Suppose the natural section $s\in H^0(X_{\Fbar},\Oc_{X_\Fbar}(D))$ has nondegenerate critical points \cite[17.3]{Kollar95}. Then $Y$ has sustained separably uniruled modifications (see Definition~\ref{ruledMod}).
\end{Lthm}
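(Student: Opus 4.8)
The plan is to reduce the global statement to an \'etale-local study of the finitely many points where $Y$ fails to be smooth over the base, and then to resolve these singularities by an explicit modification that is visibly compatible with arbitrary finite base change. First I would observe that away from the critical locus of $s$ the cover $Y\ra X$ is already smooth over $\zhens$: over the generic point $\qhens$ the characteristic is $0$ and the branch locus $D$ is smooth, while on the special fibre the relative Jacobian of $t^p-s$ is $(-\partial_{x_i}\bar s,\,pt^{p-1})$, whose reduction mod $p$ is $(-\partial_{x_i}\bar s,0)$ and hence vanishes exactly along the critical points of $\bar s$. Since $\bar s$ has nondegenerate critical points, these form a finite reduced set, disjoint from $D_\Fbar$ because $D$ is smooth. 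Thus it suffices to produce the required modification \'etale-locally near each such point and to check that the construction glues and is sustained under base change, in the sense of Definition~\ref{ruledMod}.

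Next I would put the cover in normal form near a critical point $P$. Trivializing $L$ and choosing relative coordinates $x_1,\dots,x_n$ on $X/\zhens$, write $s=a_0+\sum_i b_ix_i+\sum_{i,j}q_{ij}x_ix_j+\cdots$ with coefficients in $\zhens$; the critical-point hypothesis forces $b_i\equiv 0\pmod p$ and $(q_{ij}\bmod p)$ nondegenerate, while $a_0\not\equiv 0\pmod p$ since $P\notin D_\Fbar$. As $\Fbar$ is perfect we may write $a_0\equiv c_0^p\pmod p$ and substitute $t=c_0(1+v)$; the identity $(1+v)^p=1+v^p$ in characteristic $p$ turns the special fibre into $v^p=q(x)+(\text{higher order})$, so that $Y_\Fbar$ has, \'etale-locally, an isolated hypersurface singularity of the model form $v^p=q(x)$ with $q$ a nondegenerate quadratic form. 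On the total space the same change of variable yields $v^p=q(x)+p\cdot(\cdots)$, and this is precisely where the base change enters: the special fibre is pinned down, but the total-space singularity is governed by how $s$ meets the uniformizer.

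The central point, and the main obstacle, is the behaviour under an arbitrary finite base change $\zhens\ra R$ with $p=\varpi^e\cdot(\text{unit})$. The special fibre $Y_\Fbar$ is unchanged, but the total-space equation becomes $v^p=q(x)+\varpi^e(\cdots)$, so $Y_R$ is no longer regular once $e\ge 2$, and for wildly ramified extensions (those with $p\mid e$) the standard resolution procedures are delicate; moreover when $p=2$ the quadratic form $q$ must be handled through the characteristic-free notion of nondegeneracy of \cite[17.3]{Kollar95} rather than na\"ively. I would resolve this uniformly by a weighted blow-up (iterated if necessary) of the stratum $\{v=x_1=\cdots=x_n=\varpi=0\}$, with weights adapted to the normal form (e.g. $w(x_i)=p$, $w(v)=2$, and a weight on $\varpi$ chosen so that the $\varpi^e$ term does not lower the multiplicity of the tangent cone). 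Because the normal form, the weights, and hence the blow-up depend only on $q$ and not on $R$, this modification can be carried out compatibly for all $R$ at once; its exceptional locus is a weighted-projective hypersurface of the shape $\{v^p=q(x)\}$, that is, a $p$-cyclic cover of a projective space branched over a smooth quadric.

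Finally I would verify that this modification meets Definition~\ref{ruledMod}: it is an isomorphism away from the finitely many special points, the ambient total space becomes regular after the (iterated) weighted blow-up using nondegeneracy to guarantee that the residual singularities strictly improve at each stage, and each exceptional component is separably uniruled. The last claim is the key geometric input: a $p$-cyclic cover $\{v^p=q(x)\}$ branched over a smooth quadric is separably unirational via an explicit parametrization, obtained by projecting from a point and exploiting the rationality of the quadric, where one must check that the parametrizing family of rational curves is generically \'etale over the cover so that the uniruledness is genuinely separable in characteristic $p$. Since the weights, the blow-up, and these parametrizations are all read off from the fixed normal form and are independent of $R$, the property persists under every finite base change and is therefore sustained, which is exactly the assertion of the theorem. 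The hard part throughout is the wild ramification: controlling the deepening total-space singularity and the separability of the exceptional parametrizations uniformly in $e$, and it is here that the nondegeneracy of the critical points of $s$ is indispensable.
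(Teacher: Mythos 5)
Your overall strategy---localize at the finitely many critical points of $s$ on the special fibre, put the cover in the normal form $y^p = \text{unit} + q(x) + p(\cdots)$, and resolve by weighted blow-ups whose exceptional divisors are visibly ruled---is the same as the paper's. But two of your central claims have genuine gaps.

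First, the assertion that ``the normal form, the weights, and hence the blow-up depend only on $q$ and not on $R$'' is false, and it conceals exactly the difficulty the theorem is about. After base change to $R$ with $\ord_\varpi(p)=e$, the equation contains a term $p\,\delta^{p-1}y$ of weighted order $e\cdot w(\varpi)+w(y)$, so the weight you must assign to $\varpi$ (and, more importantly, the number of blow-ups needed) grows with $e$. A single weighted blow-up with weights fixed independently of $R$ cannot be ``an isomorphism away from the special points and regular afterwards'' for all finite extensions simultaneously. The paper instead runs an \emph{alternating sequence} of two weighted blow-ups (weights $\bigl(\tfrac{p-1}{2},\dots,\tfrac{p-1}{2},1,1\bigr)$ and $\bigl(\tfrac{p+1}{2},\dots,\tfrac{p+1}{2},1,1\bigr)$ on $(x_1,\dots,x_n,y,\pi)$), tracks two explicit families of residual singularities $F_s$ and $G_s$ whose ``depth'' index $s$ increases by one every two steps, and terminates precisely when $(2p-2)(s+1)=k=\ord_\pi(p)$ (after first enlarging $R$ so that $2(p-1)\mid k$). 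Your phrase ``the residual singularities strictly improve at each stage'' is the content of that proposition and needs the chart computations; it does not follow from nondegeneracy of $q$ alone.

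Second, your choice of weights $w(x_i)=p$, $w(v)=2$ is not merely suboptimal---it breaks the separability conclusion. Weighted blow-ups produce cyclic quotient singularities of order equal to the weights in the corresponding charts, and the way one ultimately certifies that exceptional divisors lying over those quotient points are \emph{separably} uniruled is by exhibiting a finite cover of degree equal to the quotient order by a scheme with ruled modifications (Lemma~\ref{lem:unirulingDegreeExc}); this yields unirulings of degree at most that order, which are automatically separable only when the order is prime to $p$. With weight $p$ on the $x_i$ you get degree-$p$ covers in residue characteristic $p$, and a degree-$p$ uniruling may be inseparable, so the argument collapses exactly at the point the theorem cares about. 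The paper's weights $(p\pm1)/2$ are chosen precisely so that all quotient orders are $\le (p+1)/2 < p$. (Your separate remarks---that the exceptional hypersurfaces such as $v^p=q(x)$ are rational, and that $p=2$ needs a different treatment, which the paper handles by ordinary blow-ups---are correct.)
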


\noindent Roughly speaking $Y/\zhens$ has sustained separably uniruled modifications if for every finite base change $\zhens\subset R$, every exceptional divisor over $X_R$ is separably uniruled. The proof of Theorem~\ref{thm:SusRuledMod} involves resolving these mixed characteristic cyclic covers by a sequence of weighted blowups.

\noindent\textbf{Sketch of proof of Theorem A.} Consider a scheme $Y$ over $\zhens$ with mild singularities. We may spread out a rational endomorphism on the generic fiber by considering the (normalization of the closure of the) graph $\Gamma$ in $Y \times_{\zhens} Y$, with projections $\pi_{1}, \pi_{2}$. The central fiber of $\Gamma$ possibly breaks up into a union of components
\[ \Gamma_{0} \cup E_{1} \cup \cdots \cup E_{n}, \]
where $\Gamma_{0}$ is the unique component birational to $Y_{0}$ under $\pi_{1}$ and the $E_i$ are exceptional divisors over $Y$. The second projection then gives a rational endomorphism
\begin{center}
\begin{tikzcd}
\phi_{0} \colon Y_{0} \simeq_{\text{bir.}} \Gamma_{0} \arrow[r, dashed, "\pi_{2}"] & Y_{0}
\end{tikzcd}
\end{center}
of the central fiber. The assumption on mild singularities implies that all exceptional divisors $E_{i}$ are separably uniruled, but $Y_{0}$ is not separably uniruled. Therefore, we see that each map $\pi_{2} \big|_{E_{i}} \colon E_{i} \dashrightarrow Y_{0}$ must have degree divisible by $p$. Comparing the degree of the original rational endomorphism on the generic fiber to the degree of $\phi_{0}$ we see they are congruent mod $p$. Finally, we use the positivity properties of forms in characteristic $p$ to show that $Y_{0}$ has no separable rational endomorphisms of degree $\geq 2$.

\noindent{\textbf{Acknowledgments.}} We would like to thank Claire Voisin for raising this question. We thank Dan Abramovich for explaining weighted blowups in mixed characteristic. We would also like to thank Kenneth Ascher, Nguyen-Bac Dang, Kristin DeVleming, Fran\c{c}ois Greer, Kiran Kedlaya, J\'{a}nos Koll\'{a}r, Robert Lazarsfeld, John Lesieutre, Daniel Litt, James M\textsuperscript{c}Kernan, John Ottem, Ari Shnidman, and Burt Totaro for helpful conversations.

\section{Specializing rational endomorphisms}

The purpose of this section is to explain the relationship between uniruling degrees and specializations of endomorphisms for schemes with mild singularities. Throughout, let $R$ be a DVR, let $T=\Spec(R)$ with closed point $0\in T$, let $\eta=\Frac(R)$ be the field of fractions, and let $\kappa$ be the residue field. Let $\etabar$ and $\kapbar$ be their algebraic closures.

Many of the concepts below are discussed in \cite[IV.1.1 and VI.1.6]{Kollar96}.

\begin{definition}\label{ruledMod}
Let $X$ be a normal scheme. We say that $X$ has \textit{ruled modifications} if every exceptional divisor of every normal birational modification $f\cl Y\ra X$ is ruled. Likewise, we say that $X$ has \textit{separably uniruled modifications} if every exceptional divisor of every normal birational modification $f\cl Y\ra X$ is separably uniruled.
\end{definition}

\begin{example}\label{ex:regSchRulMod}
A regular scheme $X$ admits ruled modifications (see \cite{Abhyankar56}, or \cite[Thm. VI.1.2]{Kollar96}).
\end{example}

\begin{remark}
Having ruled modifications can be checked locally. To show a scheme $X$ admits (separably uni-)ruled modifications it suffices to find a proper birational map from a regular scheme:
\[
\nu\cl \Xbar\ra X
\]
such that all exceptional divisors are (separably uni-)ruled \cite[Thm. VI.1.7]{Kollar96}.
\end{remark}

We believe the following is well known to experts.

\begin{lemma}\label{etaleToric}
If $X$ is a complex variety that admits an \'etale map to a toric variety:
\[
\psi\cl X\ra \TT
\]
then $X$ admits ruled modifications.
\end{lemma}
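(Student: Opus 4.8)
The plan is to reduce the statement to the known fact (Example~\ref{ex:regSchRulMod}) that regular schemes have ruled modifications, and to transport this property through the étale map $\psi$ and the toric structure of $\TT$. The key observation is that ruled modifications can be checked locally and can be produced by exhibiting a single proper birational map from a regular scheme whose exceptional divisors are all ruled. So I would aim to build such a resolution of $X$ by pulling back a toric resolution of $\TT$.

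First I would recall that any toric variety $\TT$ admits a toric resolution of singularities $\nu\cl\widetilde{\TT}\ra\TT$, where $\widetilde{\TT}$ is a smooth toric variety obtained by subdividing the fan; the exceptional divisors of $\nu$ are torus-invariant divisors on $\widetilde{\TT}$, hence are themselves toric varieties, and toric varieties are rational, in particular ruled. Next, since $\psi\cl X\ra\TT$ is étale, I would form the fiber product $\widetilde{X}:=X\times_\TT\widetilde{\TT}$ with its projections $\widetilde{\psi}\cl\widetilde{X}\ra\widetilde{\TT}$ and $\mu\cl\widetilde{X}\ra X$. Because étale maps are stable under base change, $\widetilde{\psi}$ is étale, and since $\widetilde{\TT}$ is smooth (hence regular) and étale maps preserve regularity, $\widetilde{X}$ is regular. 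The map $\mu\cl\widetilde{X}\ra X$ is proper (being the base change of the proper map $\nu$) and birational (being the base change of the birational map $\nu$ along the étale, in particular flat, map $\psi$; birationality is preserved because $\nu$ is an isomorphism over a dense open set of $\TT$, whose preimage in $X$ is dense open and over which $\mu$ is an isomorphism). Thus $\mu\cl\widetilde{X}\ra X$ is a proper birational map from a regular scheme.

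It then remains to check that every exceptional divisor of $\mu$ is ruled, for then the remark following Example~\ref{ex:regSchRulMod} applies directly. The exceptional divisors of $\mu$ are precisely the components of the preimages $\widetilde{\psi}^{-1}(E)$ of the exceptional divisors $E$ of $\nu$; locally in the étale topology these pullbacks are isomorphic to $E$ itself (since $\widetilde{\psi}$ is étale, each such component admits an étale map to the toric, hence ruled, exceptional divisor $E$). I would conclude ruledness by noting that ruledness is a birational notion insensitive to étale (indeed finite separable) covers in the appropriate sense: a variety admitting a dominant generically finite map from, or to, a ruled variety is ruled, and more simply, étale-locally the exceptional divisor looks like the ruled toric divisor, so it is itself ruled. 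Applying the remark, $X$ has ruled modifications.

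The main obstacle I anticipate is the last step: verifying carefully that the components of $\widetilde{\psi}^{-1}(E)$ are ruled. Ruledness is a geometric (birational) property, and while each such component maps étale-locally to the ruled divisor $E$, one must ensure this local picture globalizes to a genuine birational ruling on each component. The cleanest route is to use that $E$, being a torus-invariant divisor in a smooth toric variety, is birational to $\PP^1\times(\text{toric variety})$, so $E$ itself is even rational; then a component $F$ of $\widetilde{\psi}^{-1}(E)$ receives an étale map to this ruled $E$, and one argues that the ruling lifts. Making this lift precise — rather than merely asserting that étale covers of ruled varieties are ruled, which can fail without separability hypotheses but is fine here over $\CC$ — is where the care is required, and it is also where the hypothesis that $X$ is a complex variety (characteristic zero, so all the relevant maps are separable) is genuinely used.
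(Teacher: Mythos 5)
Your setup---pulling back a toric resolution $\Tbar\ra\TT$ along the \'etale map $\psi$ to obtain a proper birational map $\Xbar:=X\times_\TT\Tbar\ra X$ from a regular scheme, whose exceptional divisors are the components of the preimages of the toric exceptional divisors $E$---is exactly the paper's, and everything up to the last step is fine. The gap is the step you yourself flag: the assertion that a component $D$ of the preimage of $E$ is ruled because it admits an \'etale map to the ruled (indeed rational) variety $E$. That implication is false, even over $\CC$. \'Etale maps that are not finite do not preserve ruledness: a K3 double cover of $\PP^2$ branched along a smooth sextic is \'etale over the complement of the sextic, which is a rational (hence ruled) open surface, yet the K3 is not ruled. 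Since $X\ra\TT$ need not be finite, proper, or surjective, the induced map $D\ra E$ is merely \'etale with dense open image; no birational invariance of \'etale covers is available in this non-proper setting, and ``the ruling lifts'' is precisely what cannot be deduced from the \'etale map alone.

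The paper closes this gap using structure your argument discards: not the map $D\ra E$ but the map $D\ra B'$, where $B'\subset X$ is the image of $D$ and $B\subset\TT$ is the image of $E$. Because the defining square is cartesian, one has $D=E\times_B B'$; that is, $D\ra B'$ is the base change of the toric fibration $E\ra B$ along $B'\ra B$. The generic fiber $E_{\CC(B)}$ is a rational variety over the function field $\CC(B)$ (this is the toric input: the contraction $E\ra B$ is a toric morphism), and rationality over a field is preserved under arbitrary field extension, so $D_{\CC(B')}=E_{\CC(B)}\times_{\CC(B)}\CC(B')$ is rational over $\CC(B')$. Hence $D$ is birational to $\PP^k\times B'$ over $B'$, in particular ruled. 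So the correct repair of your final step is to prove \emph{relative} rationality of $D$ over its image in $X$ via this product decomposition, rather than attempting to descend ruledness along the \'etale map $D\ra E$.
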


\begin{proof}
Take a toric resolution of $\TT$; by this, we mean a morphism of toric varieties:
\[
\mu \cl \Tbar\ra \TT
\]
such that $\Tbar$ is smooth, $\mu$ is proper, and the morphism is defined by a refinement of the fan of $\TT$. Then (1) every exceptional divisor $E$ of $\mu$ is a toric variety, and (2) the map
\[
\mu|_E \cl E \ra B:=\mu(E)
\]
is a toric map. If $\CC(B)$ is the fraction field of $B$ then $E_{\CC(B)}$ is a rational $\CC(B)$-variety.

Now consider the following fiber square:
\begin{center}
\begin{tikzcd}
\Xbar\arrow[r]\arrow[d,"\nu"]&\Tbar\arrow[d,"\mu"]\\
X\arrow[r]&\TT
\end{tikzcd}
\end{center}
Then $\Xbar$ is smooth as it is \'etale over $\Tbar$. Let $D\subset \Xbar$ be an irreducible exceptional divisor of $\nu$ with image $E\subset \Tbar$ an exceptional divisor of $\mu$ and let $B'\subset X$ be the image of $D$ and $B\subset \TT$ the image of $E$. Then we have $D=E\times_B B'$ and as a result $D_{\CC(B')} = E_{\CC(B')}$ is rational over $\CC(B')$. So $D$ is ruled.
\end{proof}

For applications it will be necessary for our families $X_R$ to have ruled modifications after arbitrary finite base change $R\subset R'$. This will allow us to specialize properties from the geometric generic fiber $X_\etabar$ to the special fiber.

\begin{definition}
Let $X_R$ be a normal $R$-scheme. We say $X_R$ has \textit{sustained ruled modifications} if for every extension of DVRs $R\subset R'$ such that $\dim_\eta(\Frac(R'))$ is finite, there is a further extension $R'\subset R''$ with $\dim_\eta(\Frac(R''))$ finite such that $X_{R''}$ is a normal scheme with ruled modifications. We define \textit{sustained separably uniruled modifications} with appropriate substitutions.
\end{definition}

\begin{example}
If $X_R$ is a smooth $R$-scheme then $X_R$ is regular and for every extension $R\subset R'$ of DVRs we have $X_{R'}$ is a smooth $R'$-scheme, and is therefore regular. Thus by Example~\ref{ex:regSchRulMod}, $X_R$ admits sustained ruled modifications.
\end{example}

\begin{example}
Let $X_C \rightarrow C$ be a family of complex varieties over a smooth complex curve such that the total space $X_C$ is smooth. Let $R$ be the local ring at any point $0\in C$. Assume the central fiber $X_0\subset X_R$ is reduced with simple normal crossing. Then $X_R$ has sustained ruled modifications. Indeed let $R\subset R'$ be an extension of DVRs with uniformizers $t$ and $t'$ respectively. We can assume that $t=u(t')^k$ for some $k$ and unit $u\in R'$. After possibly extending $R'\subset R''$ to include the $k$-th root of $u$ we can assume that $t=(t'')^k$. By the simple normal crossing assumption, every point in $(X_{R''})_0$ has a neighborhood of the form
\[
U:=\left(t''^k=x_1\cdots x_m\right)\subset X\times \AA^1
\]
for some subsequence $x_1,\dots,x_m$ of a regular sequence on $X_R$. This gives (after possibly shrinking further) an \'etale map:
\[
U\ra \TT=\left(x_1\cdots x_m =(t'')^k\right)\subset \AA^{n+1}
\]
to a toric variety. By Lemma~\ref{etaleToric}, $X$ has sustained ruled modifications.
\end{example}

\begin{definition}
Let $X$ be a variety. A \textit{uniruling degree} of $X$ is the degree of a dominant and generically finite rational map $V\times \PP^1\dra X$.
\end{definition}

\begin{theorem}\label{SpecializeRatEnd}
Let $X_R$ be a normal $R$-scheme, assume $\kappa=\kapbar$ is algebraically closed, and assume $\eta$ is perfect.
\begin{enumerate}
    \item Suppose that $X_\kappa$ is reduced and irreducible. If $\ell$ divides every uniruling degree of $X_\kappa$ and $X_R$ has ruled modifications then for any rational endomorphism $\phi_\eta$ of the generic fiber $X_\eta$, there is an endomorphism $\phi_\kappa$ of $X_\kappa$ such that
    \[
    \deg(\phi_\eta)\equiv \deg(\phi_\kappa) \pmod{\ell}.
    \]
    \item Let us now assume that
    \[
    X_{\kappa} = D_{1} \cup \cdots\cup D_{m} \cup X'_{\kappa}
    \]
    is a reduced divisor and all the $D_i$ are ruled divisors. If $\ell$ divides every uniruling degree of $X'_\kappa$ and $X_R$ admits sustained ruled modifications, then for every rational endomorphism $\phi_\etabar$ of $X_\etabar$ there is a rational endomorphism $\phi_\kappa$ of $X'_\kappa$ such that
    \[
    \deg(\phi_\etabar)\equiv \deg(\phi_\kappa) \pmod{\ell}.
    \]
    \item Suppose that $\text{char } \kappa = p$ and $X_{\kappa}$ is reduced, irreducible, and not separably uniruled. If $X_R$ admits sustained separably uniruled modifications, then for every rational endomorphism $\phi_{\etabar}$ of $X_{\etabar}$, there is a rational endomorphism $\phi_{\kappa}$ of $X_{\kappa}$ such that
    \[ \deg(\phi_{\kappa}) \equiv \deg(\phi_{\kappa}) \pmod{p}. \]
\end{enumerate}
\end{theorem}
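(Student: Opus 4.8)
The plan is to prove all three parts by a single spreading-out construction applied to the graph of $\phi$, followed by a conservation-of-degree computation on the special fibre. Begin with part (1). Given $\phi_\eta\cl X_\eta\dra X_\eta$ of degree $\lambda$, let $\Gamma$ be the normalization of the closure in $X_R\times_R X_R$ of the graph of $\phi_\eta$, with projections $\pi_1,\pi_2\cl\Gamma\ra X_R$. Since $\Gamma$ is integral and dominates $\Spec R$ it is flat over the DVR $R$; the map $\pi_1$ is birational and $\pi_2$ is dominant and generically finite of degree $\lambda$. I would then analyze the special fibre $\Gamma_0=\Gamma\times_R\kappa=\mathrm{div}_\Gamma(t)$ for a uniformizer $t$. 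Because $X_\kappa$ is reduced and irreducible and $\pi_1$ is a birational proper map onto the normal scheme $X_R$, exactly one component $\Gamma_0'$ of $\Gamma_0$ maps birationally onto $X_\kappa$ under $\pi_1$ (by normality the fibre over the generic point of $X_\kappa$ is a single point of the same residue field), it occurs with multiplicity one since the special fibre is reduced, and every other component is an exceptional divisor $E_i$ of $\pi_1$. The hypothesis that $X_R$ has ruled modifications forces each $E_i$ to be ruled.

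Next I would run the degree count. Since $\Gamma$ is proper and flat over $R$, specialization of cycles commutes with $\pi_{2*}$; as $\pi_{2*}[\Gamma]=\lambda[X_R]$, specializing to the closed fibre gives $\pi_{2*}[\Gamma_0]=\lambda[X_\kappa]$. Extracting the coefficient of $[X_\kappa]$ yields
\[ \lambda=\sum_{C}m_C\,\deg(\pi_2|_C), \]
the sum over components $C$ of $\Gamma_0$ that dominate $X_\kappa$. I would set $\phi_\kappa:=\pi_2\circ(\pi_1|_{\Gamma_0'})^{-1}$, a rational self-map of $X_\kappa$ with $\deg\phi_\kappa=\deg(\pi_2|_{\Gamma_0'})$, contributing with $m_{\Gamma_0'}=1$. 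For each exceptional $E_i$ dominating $X_\kappa$, composing a birational parametrization $V\times\PP^1\dra E_i$ with $\pi_2|_{E_i}$ exhibits $\deg(\pi_2|_{E_i})$ as a uniruling degree of $X_\kappa$, hence divisible by $\ell$; components not dominating $X_\kappa$ contribute $0$. Therefore $\lambda\equiv\deg\phi_\kappa\pmod{\ell}$, proving part (1).

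For parts (2) and (3) I would first reduce to this situation after a base change. Any $\phi_{\etabar}$ is defined over a finite extension of $\eta$, so there is a finite extension of DVRs $R\subset R'$ over which it is defined; the sustained hypothesis then provides a further finite extension $R'\subset R''$ for which $X_{R''}$ is normal with (separably uni-)ruled modifications, and the degree is unchanged. Running the same graph construction over $R''$, the special fibre decomposes into the strict transforms of the components of $X_{\kappa}$ together with exceptional divisors of $\pi_1$. In part (2) the strict transforms of the ruled $D_i$ and all exceptional divisors are ruled, so the only potentially non-ruled component is the strict transform $\Gamma_0'$ of $X'_\kappa$; isolating the coefficient of $[X'_\kappa]$ in $\pi_{2*}[\Gamma_0]=\lambda[X_\kappa]$ and discarding the ruled contributions, each a multiple of $\ell$ because it realizes a uniruling degree of $X'_\kappa$, gives $\lambda\equiv\deg\phi_\kappa\pmod\ell$ for $\phi_\kappa:=\pi_2\circ(\pi_1|_{\Gamma_0'})^{-1}$.

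Part (3) is the same count with $\ell=p$, where ``ruled'' is replaced by ``separably uniruled'', and here lies the step I expect to be the main obstacle. The decisive point is that if an exceptional divisor $E_i$ is separably uniruled and $\pi_2|_{E_i}\cl E_i\dra X_\kappa$ is dominant and generically finite, then it must be \emph{inseparable}: otherwise, composing a separable parametrization $V\times\PP^1\dra E_i$ with the separable map $\pi_2|_{E_i}$ would make $X_\kappa$ separably uniruled, contrary to hypothesis. An inseparable generically finite map has degree divisible by $p$, so every exceptional contribution again vanishes mod $p$ and $\lambda\equiv\deg\phi_\kappa\pmod p$. The care needed throughout is the justification that specialization of $[\Gamma_0]$ commutes with $\pi_{2*}$ (flatness of $\Gamma$ over $R$) and that the main component occurs with multiplicity one (reducedness of the special fibre); these, together with the separability dichotomy in part (3), are where the real content lies.
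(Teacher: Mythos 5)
Your proposal is correct and follows essentially the same route as the paper: normalize the closure of the graph in $X_R\times_R X_R$, use flatness over the DVR to equate $\deg\pi_2$ with $\lambda$ and to single out the unique multiplicity-one component birational to $X_\kappa$ under $\pi_1$, then discard the exceptional components because their contributions are uniruling degrees divisible by $\ell$ (and, in part (3), because a separably uniruled divisor can only dominate a non-separably-uniruled $X_\kappa$ inseparably, hence with degree divisible by $p$). The base change to a finite extension of DVRs where $\phi_{\etabar}$ is defined, followed by the sustained-modifications hypothesis, is also exactly how the paper handles parts (2) and (3).
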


\begin{proof}
For (1), consider the normalization of the closure of the graph of $\phi_\eta$,
\[
\gammabar_R\ra X_R\times_R X_R.
\]
Then $\gammabar_R$ is flat over $T$ and the central fiber $\Gamma_\kappa$ has 2 projections $\pi_1$ and $\pi_2$. By flatness, the projection map $\pi_1$ (resp. $\pi_2$) has degree 1 (resp. $\deg(\phi_\eta)$). Thus $\pi_1$ maps a unique component of $\Gamma_\kappa$ birationally to $X_\kappa$. We will show that $\phi_\kappa :=\pi_2\circ (\pi_1|_{X'}^{-1})$ has the required degree.

As $X_R$ has ruled modifications, every other component of $\Gamma_\kappa$ is ruled. Thus we may write
\[
\Gamma_\kappa = X'_\kappa\cup E_1 \cup \cdots \cup E_r
\]
where $X'_\kappa$ is the component mapping birationally to $X_\kappa$ and the reduced exceptional divisors $E_i^{\mathrm{red}}$ are all ruled. The degrees of the maps
\[
E_i\ra X_\kappa
\]
under the second projection are all either 0 or uniruling degrees. Therefore,
\begin{align*}
\deg(\phi_\eta) &= \deg(\pi_2) \\ &=\deg(\pi_2|_{X'_\kappa})+\sum_i \deg(\pi_2|_{E_i}) \\
&\equiv \deg(\pi_2|_{X'_\kappa}) \pmod{\ell}
\end{align*}

To prove (2), let $\eta \subset \eta'$ be a finite extension such that $\phi_\etabar$ is defined over $\eta'$. Moreover, assume there is a DVR $R'\subset \eta'$ such that $X_{R'}$ has ruled modifications and $\Frac(R')=\eta'$. Note that the residue field of $B$ is also $\kappa$ (as $\kappa$ is assumed to be algebraically closed). Consider the normalization of the closure of the graph of $\phi_{\eta'}$:
\[
\gammabar \ra X_{R'}\times_{R'}X_{R'}.
\]
The case when $\deg(\phi_\etabar)\equiv 0 \pmod{\ell}$ is uninteresting (by considering any constant map $X'_\kappa\ra X'_\kappa$). So we may assume that $\deg(\phi_\etabar)\not\equiv 0\pmod{\ell}$. As in the proof of (1), we have by flatness that the map $\pi_1$ (resp. $\pi_2$) has degree 1 (resp. $\deg(\phi_\etabar)$). So we may write
\[
\Gamma_\kappa = \Xbar'_\kappa\cup D'_1\cup \cdots D'_m \cup E_1 \cup \cdots \cup E_r
\]
where $\Xbar'_\kappa$ (resp. $D'_i$) is birational to $\Xbar_\kappa$ (resp. $D_i$) under $\pi_1$, and the $E_i$ are contracted by $\pi_1$. Then all the $E_i$ are ruled. The assumption that $\deg(\phi_\etabar)\not\equiv 0\pmod{\ell}$ and that $\ell$ divides any uniruling degree of $X_\kappa$ implies that $\Xbar'_\kappa$ maps dominantly and generically finitely onto $\Xbar_\kappa.$ Thus the composition
\[
\phi_\kappa: \Xbar_\kappa\simeq_{\mathrm{bir}} \Xbar'_\kappa\ra \Xbar_\kappa
\]
is a rational endomorphism of $\Xbar'_\kappa$ and a similar computation to part (1) proves
\[
\deg(\phi_\kappa) \equiv \deg(\phi_\etabar)\pmod{\ell}.
\]

The proof for (3) follows the same strategy as (1). The added hypothesis that $X_{\kappa}$ is not separably uniruled implies that every component $E_i$ of $\Gamma_{\kappa}$ other than the unique one birational to $X_{\kappa}$ (via $\pi_{1}$) is separably uniruled. Therefore each map $E_i\ra X_R$ has degree divisible by $p$. The proof then follows part (1).
\end{proof}

\begin{corollary}
Working over $\CC$, let $X \rightarrow B$ be a smooth family over a complex variety $B$. Fix integers $\ell \geq 2$ and $\lambda \geq 1$. Then the locus
\[ B_{(\ell, \lambda)} := \left\{ b \in B \ \middle| \ \let\scriptstyle\textstyle \substack{ \ell \text{ divides every uniruling degree of } X_{b} \text{ and } \not\exists \text{ a rational}\\ \text{ endomorphism } \varphi_{b}: X_{b} \dashrightarrow X_{b} \text{ with degree } \equiv\lambda \pmod{\ell}} \right\}, \]
is given by the complement of a countable union of closed subvarieties of $B$.
\end{corollary}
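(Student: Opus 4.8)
The plan is to show that the complementary locus
\[
B^{\mathrm{bad}} := B\setminus B_{(\ell,\lambda)}
\]
is a countable union of closed subvarieties. Write $P_1(b)$ for the condition ``$\ell$ divides every uniruling degree of $X_b$'' and $P_2(b)$ for ``$X_b$ admits no rational endomorphism of degree $\equiv\lambda\pmod{\ell}$'', so that $B^{\mathrm{bad}}=(\neg P_1)\cup(\neg P_2)$. First I would parametrize the relevant maps. Rational endomorphisms of the fibers are parametrized by their graphs in the relative Hilbert scheme $\mathrm{Hilb}(X\times_B X/B)$, which is a countable disjoint union of finite-type $B$-schemes; unirulings are parametrized, fiber by fiber, by the components of the relative space of rational curves on $X/B$ (equivalently the relative $\mathrm{Mor}(\PP^1,X/B)$) whose universal evaluation map is dominant and generically finite, again a countable disjoint union of finite-type $B$-schemes. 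On each such family the conditions cutting out genuine graphs of rational endomorphisms, respectively dominant generically finite unirulings, together with the value of the degree modulo $\ell$, are constructible; by Chevalley's theorem their images in $B$ are constructible. Restricting to the families of unirulings of degree $e\not\equiv 0\pmod{\ell}$ and to the families of endomorphisms of degree $\equiv\lambda\pmod{\ell}$ exhibits $B^{\mathrm{bad}}=\bigcup_i C_i$ as a countable union of constructible subsets $C_i\subset B$.

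Next I would reduce the statement to a single specialization property. The closure of a constructible subset is exactly the set of its specializations. Hence, once I know that $B^{\mathrm{bad}}$ is stable under specialization, each closure $\overline{C_i}$ is again contained in $B^{\mathrm{bad}}$, and
\[
B^{\mathrm{bad}}=\bigcup_i\overline{C_i}
\]
is a countable union of closed subvarieties, which is precisely the assertion. Equivalently, it suffices to prove that $B_{(\ell,\lambda)}$ is stable under generization.

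To prove generization-stability, fix $b_0\in B_{(\ell,\lambda)}$ and a generization $b_1$ (so $b_0$ lies in the closure of $b_1$), and choose a DVR $R$ with a map $\Spec R\ra B$ sending the generic point to $b_1$ and the closed point to $b_0$; in characteristic $0$ the fraction field is perfect, and after a finite extension we may assume the residue field is algebraically closed. Since $X\ra B$ is smooth, $X_R$ is regular and so has ruled modifications after any base change. For $P_1$: a uniruling of $X_{b_1}$ corresponds to a covering family of rational curves, and taking the limit of this family over $R$ (via the closure inside the relative space of rational curves) produces a uniruling of $X_{b_0}$ of the same degree; as $\ell$ divides every uniruling degree of $X_{b_0}$, it divides that of $X_{b_1}$, so $P_1(b_1)$ holds. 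For $P_2$: were there a rational endomorphism of $X_{b_1}$ of degree $\equiv\lambda\pmod{\ell}$, then---because $P_1(b_0)$ holds and $X_R$ has ruled modifications---Theorem~\ref{SpecializeRatEnd}(1) would produce a rational endomorphism of $X_{b_0}$ of degree $\equiv\lambda\pmod{\ell}$, contradicting $b_0\in B_{(\ell,\lambda)}$; hence $P_2(b_1)$ holds. Thus $b_1\in B_{(\ell,\lambda)}$.

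The main obstacle is the $P_1$ step: one must verify carefully that a uniruling of the generic fiber degenerates to a uniruling of the special fiber of the \emph{same} degree, which means controlling dominance and generic finiteness of the limiting evaluation map, arranging the limiting source (or a dominating irreducible component of it, which remains ruled) to carry the full degree, and confirming that the degree is the invariant preserved in the flat limit. The endomorphism condition $P_2$ is handled directly by Theorem~\ref{SpecializeRatEnd}, the essential point being that the degree modulo $\ell$ is preserved under specialization. The remaining bookkeeping---choosing the DVR (possibly after a finite extension of fraction fields, as in the proof of part (2) of that theorem), arranging an algebraically closed residue field, and passing between scheme-theoretic points and very general $\CC$-points---is routine.
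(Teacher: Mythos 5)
The paper states this corollary without proof, as an immediate consequence of Theorem~\ref{SpecializeRatEnd} and the spreading-out lemmas, so there is no ``official'' argument to compare against; your route (parametrize witnesses by countably many finite-type $B$-schemes, apply Chevalley, then show the bad locus is stable under specialization so that the closures of the constructible pieces stay bad) is exactly the intended one, and the $P_2$ half is handled correctly by Theorem~\ref{SpecializeRatEnd}(1)--(2) applied to the regular total space $X_R$. Two points need tightening. First, the step you yourself flag as the main obstacle is resolved slightly differently from what you sketch: a uniruling $V\times\PP^1\dra X_{b_1}$ of degree $e$ does \emph{not} in general limit to a single uniruling of $X_{b_0}$ of degree $e$, and you cannot arrange one component of the limit to ``carry the full degree.'' Rather, after compactifying and resolving so that $V_R\times\PP^1$ is regular, every component of the special fiber of the normalized graph is ruled (strict transforms of $V_\kappa\times\PP^1$ are products with $\PP^1$, and exceptional divisors over a regular scheme are ruled by Example~\ref{ex:regSchRulMod}); the degree $e$ then splits as a sum $\sum_i m_i e_i$ over the components dominating $X_{b_0}$, where each $e_i$ is a uniruling degree of $X_{b_0}$ and hence divisible by $\ell$ by hypothesis, so $\ell\mid e$. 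This is the same bookkeeping as in the proof of Theorem~\ref{SpecializeRatEnd}(1), just run on the source side. Second, two cosmetic corrections: parametrizing unirulings by whole \emph{components} of the relative space of rational curves is too coarse (a uniruling may come from a proper subvariety of a component, so you need a further countable union, e.g.\ relative Hilbert schemes of those morphism spaces --- this does not affect divisibility since any uniruling degree is a multiple of one induced from the morphism space); and a finite extension of a DVR does not make its residue field algebraically closed --- but this is harmless, since to identify the closure of a constructible set inside the bad locus it suffices to treat specializations to closed points of $B$, where the residue field is already $\CC$, after first passing from the witness over the geometric generic point of a component of $\overline{C_i}$ via Lemma~\ref{lem:extendingRatEnd}.
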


\begin{lemma}\label{lem:extendingRatEnd}
\begin{enumerate}
\item Suppose that $X\ra B$ is a map of complex varieties. Let $\etabar$ be the geometric generic point of $B$. If a very general fiber $X_t\in X$ admits a rational endomorphism of degree $\lambda$ then so does the geometric generic fiber $X_\etabar$.
\item Suppose that $K\subset L$ is an extension of algebraically closed fields. If $X_L$ admits a rational endomorphism of degree $\lambda$ then so does $X_K$.
\end{enumerate}
\end{lemma}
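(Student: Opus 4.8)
The plan is to handle both parts by spreading out rational endomorphisms in families and invoking the fact that a complex variety is not a countable union of proper closed subvarieties. I will treat $B$ (respectively the variety $W$ below) as irreducible, passing to an irreducible component if necessary.

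For part (1), I would first parametrize degree-$\lambda$ rational endomorphisms of the fibers of $X\ra B$. A rational endomorphism of degree $\lambda$ of a fiber $X_b$ is the same as its graph: a reduced irreducible closed subscheme $\Gamma_b\subset X_b\times X_b$ for which the first projection $\pi_1\colon\Gamma_b\ra X_b$ is birational and the second projection $\pi_2\colon\Gamma_b\ra X_b$ is dominant and generically finite of degree $\lambda$. Such graphs are parametrized by the relative Hilbert scheme of $X\times_B X$ over $B$, which breaks up into countably many quasi-projective $B$-schemes $H_i$ (indexed by Hilbert polynomials). Inside each $H_i$ the condition that the parametrized subscheme be integral with $\pi_1$ birational and $\pi_2$ generically finite of degree $\lambda$ is constructible, so by Chevalley its image $Z_i\subset B$ is a constructible subset; and $X_b$ admits a rational endomorphism of degree $\lambda$ exactly when $b\in\bigcup_i Z_i$. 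The hypothesis that a very general fiber admits such an endomorphism means $\bigcup_i Z_i$ contains the complement of a countable union of proper closed subvarieties, so $B=\bigcup_i\overline{Z_i}\cup\bigcup_j W_j$ with each $W_j$ proper and closed. Since $B$ cannot be covered by countably many proper closed subvarieties, some $\overline{Z_i}=B$; as $Z_i$ is constructible and dense it contains the generic point $\eta$. The universal graph over a dense open subset of $Z_i$ then restricts to a graph over $\kappa(\eta)=\CC(B)$ giving a rational endomorphism of $X_\eta$ of degree $\lambda$, and base change along $\kappa(\eta)\subset\overline{\kappa(\eta)}$ yields the desired endomorphism of $X_\etabar$.

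For part (2), I would descend the endomorphism of $X_L$ to a finitely generated subextension and then specialize. The graph $\Gamma_L\subset X_L\times_L X_L$ is defined by finitely many equations, so there is an intermediate field $K\subset K'\subset L$ with $K'$ finitely generated over $K$ and a graph $\Gamma_{K'}$ over $K'$ satisfying $\Gamma_L=\Gamma_{K'}\times_{K'}L$; faithful flatness of $K'\subset L$ shows $\Gamma_{K'}$ defines a rational endomorphism of $X_{K'}$ of degree $\lambda$. Since $K$ is algebraically closed, $K'$ is the function field of an integral $K$-variety $W$, and the endomorphism over $\kappa(W)=K'$ spreads out to a relative graph over a dense open $U\subseteq W$ whose restriction over each closed point $w\in U$ is a rational endomorphism of $X_{\kappa(w)}$ of degree $\lambda$. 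By the Nullstellensatz the residue field at each closed point of $U$ is $K$, so any such $w$ produces a rational endomorphism of $X_K$ of degree $\lambda$.

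I expect the main obstacle to be the same in both parts: confirming that ``admits a rational endomorphism of degree $\lambda$'' is genuinely a constructible/parametrizable condition, so that it defines the constructible loci $Z_i$ in (1) and spreads out over $U$ in (2). Concretely, one must check that generic finiteness of $\pi_2$ and the exact value $\lambda$ of its degree are constructible and are preserved under base change and under specialization to a general member of the family; this follows from upper semicontinuity of fiber dimension together with the constructibility of the degree of a generically finite morphism, after shrinking the parameter space where necessary.
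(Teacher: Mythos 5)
Your proposal is correct. Part (2) is essentially the paper's argument: descend the graph to a finitely generated subextension, spread it out over an integral model $W$, and specialize at a general closed point, whose residue field is $K$ by the Nullstellensatz; the paper phrases this as choosing a general $K$-point of $\Spec(A)$ for a finitely generated ring $K\subset A\subset L$ containing the coefficients. Part (1), however, takes a genuinely different route. The paper invokes \cite[Lem.~2.1]{Vial13} to produce an abstract field isomorphism $\etabar\cong\CC$ under which $X_\etabar\times_\etabar\CC\cong X_t$ for a very general $t$, and then simply transports the rational endomorphism through the composite scheme isomorphism $X_\etabar\cong X_t\dra X_t\cong X_\etabar$ -- a two-line argument that hides the cardinality/transcendence-basis argument inside the cited lemma. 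You instead parametrize graphs of degree-$\lambda$ endomorphisms by the countably many components of the relative Hilbert scheme of $X\times_B X$, observe that the relevant conditions cut out constructible loci with constructible images $Z_i\subset B$, and run the ``$B$ is not a countable union of proper closed subvarieties'' argument directly to force some $Z_i$ to contain the generic point. Your version is more self-contained and makes explicit exactly which spreading-out facts are used (and it yields an endomorphism already over a finite extension of $\CC(B)$, not merely over $\etabar$), at the cost of being longer; both ultimately rest on the uncountability of $\CC$. One small imprecision: a point of $H_i$ lying over $\eta$ need only have residue field a finite extension of $\kappa(\eta)$ rather than $\kappa(\eta)$ itself, but since you immediately base change to $\overline{\kappa(\eta)}=\etabar$ this is harmless.
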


\begin{proof}
For part (1), by \cite[Lem. 2.1]{Vial13} there is a field isomorphism $\etabar\cong \CC$ such that the fiber product $X_\etabar \times_\etabar \CC$ is isomorphic to $X_t$ as varieties over $\CC$. Consider the rational endomorphism of schemes: $X_\etabar \cong X_t \dra X_t\cong X_\etabar.$ The first and last isomorphisms are isomorphisms of schemes, and the composition commutes with the map to $\etabar$. This gives the required endomorphism of $X_\etabar$.

For part (2), there is a finitely generated integral ring $A$ with $K\subset A \subset L$ which contains all the coefficients defining the rational endomorphism. Then the rational map defined over $L$ spreads out to a rational map $X_A\dra X_A$. Choosing a general $K$-point of $\Spec(A)$ gives the rational map $X_K\dra X_K$ which has the same degree.
\end{proof}


\section{Resolving cyclic covers after base change}

The purpose of this section is to prove that $p$-cyclic covers admit sustained separably uniruled modifications in mixed characteristic $p$. To do this we need to resolve the typical singularities of these covers. Our resolutions involve certain weighted blowups, which at every step alternate between two types of isolated double point hypersurface singularities -- up to some mild quotient singularities (the equations of the singularities are given by $\eqref{singEq}$ in \S \ref{subsection:resolveCharP}). We believe that an alternative approach to these results is through resolution techniques coming from log geometry (see e.g. \cite{ILO14}). The arguments in this section are mostly computational in nature.

\subsection{Local equations for singularities of \texorpdfstring{$p$}{\texttwoinferior}-cyclic covers}

To start we recall the definition of a cyclic cover. Let $X$ be a scheme and $\Lc$ a line bundle on $X$. Let $s\in H^0(X,\Lc^{\otimes m})$ be a section. Let
\[
\LL=\Spec_{\Oc_X}\left(\oplus_{i\ge 0} \Lc^{-i}\cdot y^i\right)
\]
(resp. $\LL^{\otimes m}$) be the total space of the line bundle $\Lc$ (resp. $\Lc^{\otimes m}$). Then $s$ defines a section:
\begin{center}
\begin{tikzcd}
\LL^{\otimes m}\arrow[r]&X.\arrow[l,bend left=40,"s"]
\end{tikzcd}
\end{center}
There is also an $m$th power map: $\LL\xrightarrow{p_m}\LL^{\otimes m}$ which is a $\mu_m$-quotient.

\begin{definition}
The \textit{$m$-cyclic cover of $s$} is $Y:=p_m^{-1}(s(X))$. We say that the cyclic cover $Y$ has \textit{branch divisor} $(s=0)\subset X$.
\end{definition}

\noindent It follows that $Y\cong \Spec_{\Oc_X}\left(\oplus_{i\ge 0} \Lc^{-i} \cdot y^i/(y^m-s)\right)$.

Fix an odd prime $p$ and let $X_\zhens$ be an integral scheme that is smooth over $\zhens$. Let
\[
\sigma\cl Y_\zhens \ra X_\zhens
\]
be a $p$-cyclic cover with branch divisor $D=(s=0)$. Assume that
\begin{itemize}
\item the branch divisor $D$ is smooth over $\zhens$, and
\item the section $s\in H^0(X_\Fbar,\Oc_{X_\Fbar}(D))$ has \textit{nondegenerate critical points} \cite[\S20.3]{Kollar95}.
\end{itemize}
Let $q\in Y(\Fbar)$ and $\sigma(q)\in X(\Fbar)$ be points.

Now assume that
\[
p, x_1,\dots,x_n
\]
form a regular sequence for the regular local ring $S$ of $X$ at $\sigma(q)$. By the cyclic cover construction, $Y$ has local equation:
\[
0=y^p+u+f_1+f_2+f_3\subset S[y]
\]
where $y$ is the fiber coordinate, $u,f_1,f_2\in \zhens[x_1,\dots,x_n]$ are constant, linear, and quadratic polynomials in the $x_i$, and 
\[
f_3\in (x_1,\dots,x_n)^3 \subset S.
\]
Note that $Y_\Fbar$ is singular at $q \iff f_1=0\in S/p$. In addition, if $q\in Y_\Fbar$ is a singular point, then $u\in \zhens$ is a unit (as the branch divisor was assumed to be smooth).

If $q\in Y(\Fbar)$ is a singular point, then as $s$ has nondegenerate critical points, we have $f_2\in \Fbar[\xs]$ is a non-degenerate quadratic form. Thus, an appropriate ($\mathrm{GL}_n(\zhens)$-linear) change of the terms of the $x$-terms of the regular sequence of $S$ will diagonalize $f_2$ modulo $p$, so we may assume that $Y$ has an equation of the form
\[
y^p+u+x_1^2+\cdots+x_n^2+p(f_1+f_2)+f_3\in S[y],
\]
where $u\in \zhens$ is a unit, $\xs \in S$ is a regular sequence at $\sigma(q)$, $f_1$ and $f_2\in \zhens[\xs]$ are homogeneous linear and quadratic polynomials in the $\xs$ and $f_3$ is in the ideal $(\xs)^3\subset S$. The following lemma gives a local description of the singularities on $Y$:

\begin{lemma}
There are elements $\epsilon_1,\dots,\epsilon_n\in p\zhens$ such that if we set $\xbar_i := x_i-\epsilon_i$, then the regular sequence
\[
p,\xbar_1,\dots,\xbar_n\in S
\]
locally gives $Y$ the form
\[
0=y^p+u+\xbar_1^2+\dots+\xbar_n^2+pf_2+f_3\in S[y]
\]
where $u\in \zhens$ is a unit, $f_2\in \zhens[\xbar_1,\dots,\xbar_n]$ is quadratic in the $\xbar_i$s, and $f_3\in (\xbar_1,\dots,\xbar_n)^3\subset S$.
\end{lemma}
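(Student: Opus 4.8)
The plan is to kill the linear term $pf_1$ by recentering the fiber coordinates at a critical point of the fiberwise ``potential''
\[
g := u + \sum_{i=1}^n x_i^2 + p(f_1+f_2) + f_3 \in S,
\]
so that $Y$ is locally cut out by $y^p + g$. Since $X$ is smooth over $\zhens$ and $p,x_1,\dots,x_n$ is a regular system of parameters, $\Omega_{X/\zhens}$ is freely generated near $\sigma(q)$ by $dx_1,\dots,dx_n$, so the derivations $\partial/\partial x_i\cl S\ra S$ are well defined; note $\partial f_3/\partial x_i\in(x_1,\dots,x_n)^2$ because $f_3\in(x_1,\dots,x_n)^3$. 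Writing $a_i:=\partial f_1/\partial x_i\in\zhens$, the relative critical locus is
\[
Z:=V(h_1,\dots,h_n)\subset X,\qquad h_i:=\frac{\partial g}{\partial x_i}=2x_i+pa_i+p\frac{\partial f_2}{\partial x_i}+\frac{\partial f_3}{\partial x_i}.
\]
Translating $x_i\mapsto \bar x_i+\epsilon_i$ so that the new linear part in the $\bar x_i$ vanishes is exactly the requirement that $\epsilon=(\epsilon_i)$ be a point of $Z$.

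Next I would produce such a point $\epsilon$ over $\zhens$ lying over $\sigma(q)$. Each $h_i$ lies in the maximal ideal at $\sigma(q)$ (every summand does), so $\sigma(q)\in Z(\Fbar)$. The Jacobian $(\partial h_i/\partial x_j)$ equals $2\delta_{ij}+p(\cdots)+\partial^2 f_3/\partial x_i\partial x_j$, which reduces to $2I$ at $\sigma(q)$; since $p$ is odd this is invertible, so by the relative Jacobian criterion $Z\ra\Spec\zhens$ is \'etale near $\sigma(q)$. As $\zhens$ is strictly Henselian with (separably closed) residue field $\Fbar$, this \'etale map admits a unique section through the $\Fbar$-point $\sigma(q)$, giving $\epsilon\in Z(\zhens)\subset X(\zhens)$. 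Because $\epsilon$ reduces to $\sigma(q)$, where all $x_i$ vanish, we get $\epsilon_i\in p\zhens$ as required; and $p,\bar x_1,\dots,\bar x_n$ (with $\bar x_i:=x_i-\epsilon_i$) is again a regular system of parameters, as we only translated by elements of the maximal ideal.

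Finally I would read off the claimed normal form from the $J$-adic filtration, $J:=(\bar x_1,\dots,\bar x_n)$. Since $S/J\cong\zhens$ via $\epsilon$, the image $u':=g\bmod J=g(\epsilon)$ equals $u+\sum\epsilon_i^2+pf_1(\epsilon)+pf_2(\epsilon)+f_3(\epsilon)$; as $\epsilon_i\in p\zhens$ every correction lies in $p^2\zhens$, so $u'$ is still a unit. That $\epsilon\in Z$ means the image of $g-u'$ in $J/J^2$, namely $\sum_i h_i(\epsilon)\,d\bar x_i$, vanishes, so $g-u'\in J^2$. The image of $g-u'$ in $J^2/J^3=\mathrm{Sym}^2_{\zhens}(J/J^2)$ is the Hessian quadratic form $\tfrac12\sum_{i,j}(\partial^2 g/\partial x_i\partial x_j)(\epsilon)\,\bar x_i\bar x_j$; evaluating gives $(\partial^2 g/\partial x_i\partial x_j)(\epsilon)=2\delta_{ij}+pm_{ij}$ with $m_{ij}\in\zhens$ (here $\partial^2 f_3/\partial x_i\partial x_j(\epsilon)\in p\zhens$ since it lies in $(x_1,\dots,x_n)$), so this form is $\sum\bar x_i^2+p\bar f_2$ with $\bar f_2:=\tfrac12\sum_{i,j} m_{ij}\bar x_i\bar x_j$, again using $p$ odd. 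Setting $\bar f_3:=g-u'-\sum\bar x_i^2-p\bar f_2\in J^3$ yields the stated equation.

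The main obstacle is constructing $\epsilon$ with coordinates honestly in $p\zhens$. The naive approach of completing the square term by term only converges $p$-adically, and hence lands in the completion $W(\Fbar)$ rather than in the non-complete ring $\zhens$; replacing this iteration by the single-step \'etale-section argument above, which exploits strict Henselianity together with the invertibility of $2$, is the crux. A secondary point to handle carefully is that $f_3$ is a general element of $S$ rather than a polynomial, which is why I organize the final expansion intrinsically through the $J$-adic filtration instead of by literal substitution.
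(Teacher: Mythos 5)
Your proof is correct, but it takes a genuinely different route from the paper's. The paper eliminates the linear term by iteratively completing the square: it builds $\epsilon_i$ as a $p$-adically convergent series $\sum_m \epsilon_{i,m}$ with $p^m\mid\epsilon_{i,m}$, each step trading a linear term $p^mf_1$ for one divisible by $p^{m+1}$; since $\zhens$ is not $p$-adically complete, the paper then has to justify why the limit actually lies in $\zhens$ (it offers two workarounds: observing the data descend to a finite \'etale, $p$-adically complete extension of $\ZZ_p$, or passing to the $p$-adic completion). You replace the whole iteration by a single application of the defining property of a strictly Henselian ring: the relative critical locus $Z=V(\partial g/\partial x_1,\dots,\partial g/\partial x_n)$ is \'etale over $\zhens$ at $\sigma(q)$ because the Hessian reduces to $2I$ there (invertible as $p$ is odd), so the $\Fbar$-point lifts uniquely to a section $\epsilon\in Z(\zhens)$ with $\epsilon_i\in p\zhens$ automatic. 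This buys two things: the convergence issue disappears entirely (Henselianity is precisely the substitute for completeness in this lifting problem), and the normal form is then read off cleanly from the $J$-adic filtration --- the vanishing of the class of $g-u'$ in $J/J^2$ is literally the condition $\epsilon\in Z$, and the $J^2/J^3$ term is the Hessian form $\sum\xbar_i^2+pf_2$ --- which also handles the fact that $f_3$ is a general element of $(x_1,\dots,x_n)^3\subset S$ rather than a polynomial, something the paper's substitution-by-substitution bookkeeping glosses over. The paper's iteration is more elementary and makes the ``complete the square'' mechanism explicit; yours is the tighter argument and is the one that lands in $\zhens$ without further comment. Your verification that $u'$ remains a unit, that the off-diagonal Hessian entries are divisible by $p$ (via $\partial^2 f_3/\partial x_i\partial x_j\in(x_1,\dots,x_n)$), and that $p,\xbar_1,\dots,\xbar_n$ is still a regular system of parameters are all correct.
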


\begin{proof}
We define the elements $\epsilon_i$ by a convergent series:
\[
\epsilon_i=\sum\limits_{m=1}^\infty \epsilon_{i,m}\in\zhens
\]
for $\epsilon_{i,m}\in \zhens$. The idea is to iteratively complete the square for functions of the form:
\[
p^mf_1+x_1^2+\cdots+x_n^2.
\]
A technical problem is that $\zhens$ is not $p$-adically complete so one should be careful that the series converges inside $\zhens$. There are two solutions to this problem. The first is that every coefficient is actually defined over some ring that is a finite \'etale extension of the $p$-adics which is $p$-adically complete. The second, is to instead work with a $p$-adic completion of the strict Henselization of $\zhens$.

Consider an element of the form:
\[
F=u+x_1^2+\cdots+x_n^2+p^mf_1+pf_2+f_3\in S[y].
\]
Assume that $f_1=a_1x_1+\cdots+a_nx_n$, and set $\epsilon_{i,m}:=-p^{m} a_i/2$. Then the change of coordinates:
\[
\xbar_i+\epsilon_{i,m}=x_i
\]
gives
\begin{align*}
y^3+u&+(\xbar_1-p^ma_1/2)(\xbar_1+p^ma_1/2)+\cdots+(\xbar_n-p^ma_n/2)(\xbar_n+p^ma_n/2)+\\
&3f_2(\xbar_1-p^ma_1/2,\dots,\xbar_n-p^ma_n/2)+f_3(\xbar_1-p^ma_1/2,\dots,\xbar_n-p^ma_n/2)\in S[y].
\end{align*}
Expanding the terms $pf_2$ and $f_3$ into constant, linear, and quadratic terms in the $\xbar_i$ gives:
\[
y^3+u'+\xbar_1^2+\cdots+\xbar_n^2+p^{m+1}f'_1+ pf'_2+f'_3\in S[y].
\]
where $u'\in \zhens$ is a unit, $f'_1 (\text{resp. }f'_2) \in \zhens[\xbar_1,\dots,\xbar_n]$ is linear (resp. quadratic), and $f'_3$ is in the ideal $(\xbar_1,\dots,\xbar_n)^3\subset S.$ As $p^m|\epsilon_{i,m}$ we see that $\epsilon_i:=\sum \epsilon_{i,m}\in \zhens$ converges. Moreover, the p-adic norm of the linear term goes to 0.
\end{proof}

In order to prove Theorem~\ref{thm:SusRuledMod}, we need to show that after any extension of DVRs $\zhens\subset R'$ (with $\dim_\qhens(\Frac(R'))$ finite) there is a further extension $R' \subset R$ (with $\dim_\qhens(\Frac(R))$ finite) such that $Y_{R}$ admits ruled modifications. Let $\pi$ be a uniformizer of $R$. We know that there are finitely many singular points $q_i\in Y_\Fbar$, each of which (by Lemma 2.2) has local equation
\[
y^p+u_i+x_1^2+\cdots+x_n^2+pf_2+f_3=0
\]
where $u_i\in \zhens$ are units. We may assume that $R$ contains the $p$-th roots of all the $u_i$ and also assume that $k=\ord_\pi(p)$ is divisible by $2(p-1)$.

The regular local ring
\[
S_R := S\otimes_\zhens  R,
\]
comes with a regular sequence $\pi, x_1,\ldots,x_n$. Given a singularity of $Y_R$ with local equation above and unit term $u$, we may assume that $-\delta\in R$ is a $p$-th root of $u$. After changing the $y$-coordinate by $\delta$, we see that $Y_R$ is locally given by the equation
\[
0=y^p+py^2( \delta^2 y^{p-3}+\cdots+(p-1)\delta^{p-2}/2) + p y \delta^{p-1} + x_1^2+\cdots+x_n^2+pf_2+f_3\in S_R[y].
\]
We have now reduced to resolving singularities of this form.

\subsection{Charts for two types of weighted blowups.} To resolve the singularities of $Y_R$, we alternate between two types of weighted blowups. Here we will define and give affine charts for these weighted blowups. First we define these blowups for affine space.

The affine space $\AA^{n+1}_R$ has a regular sequence $\xs,y,\pi$ at the origin. For $\AA^{n+1}_R$ we consider the weighted blowups with weights: $((p-1)/2,\dots,(p-1)/2,1,1)$ and $((p+1)/2,\dots,(p+1)/2,1,1)$. We refer to these weighted blowups as
\[
\nu^-\cl\wb^-\ra \AA^{n+1}_R\text{, and }\nu^+\cl \wb^+\ra\AA^{n+1}_R.
\]
These charts will be very similar, so rather than describing them separately, we will just write
\[
\nu^\pm\cl \wb^\pm\ra \AA^{n+1}_R.
\]
To define $\wb^\pm$, we first consider the affine scheme:
\[
Z^\pm :=\Spec(R[x_1',\ldots,x_n',y',\omega,T]/(\pi=\omega T)).
\]
There is a map:
\[
Z^\pm\ra \AA^{n+1}_R
\]
induced by $(x_{1}, \ldots, x_{n}, y) \mapsto (x_1'T^{(p\pm 1)/2},\ldots, x_n'T^{(p\pm 1)/2},y'T)$. There is a $(\Gm)_R$-action on $Z^\pm$ defined by the following weights: $x_i'$ has weight $(p\pm 1)/2$, $y'$ has weight 1, $\omega$ has weight $1$, and $T$ has weight $-1$. The map $Z^\pm\ra \AA^{n+1}_R$ is $\Gm$-equivariant. We define the weighted blowup as
\[
\wb^\pm:=(Z^\pm\setminus (x_1'=\cdots=x_n'=y'=\omega=0))/\Gm,
\]
which comes with a natural projective morphism:
\[
\nu^\pm\cl \wb^\pm \ra \AA^{n+1}_R.
\]
For each coordinate $x_1',\dots,x_n',y',\omega$, there is an affine chart of $\wb^\pm$ defined by the $\Gm$-quotient of the complement of the associated divisor in $Z^\pm$.

The $x_1'\ne 0$ chart, denoted $\wb^\pm_{x_1}$ (similarly for the $x_i'$ charts) is computed by taking the spectrum of the ring of invariants:
\[
\wb_{x_1}^\pm :=\left(R[x_1',1/x_1',x_2',\dots,x_n',y',\omega,T]/(\pi=\omega T)\right)^\Gm.
\]
As $x_1'\ne 0$, if we let $z$ be a $(p\pm 1)/2$-th root of $x_1'$ this defines an \'etale cyclic cover:
\[
\Spec R[z,1/z,x_2',\dots,x_n',y',\omega,T]/(\pi=\omega T) \ra \wb^\pm_{x_1}.
\]
There is a $\Gm$-action on the new ring where $z$ has weight 1, and this \'etale cover descends to a finite (but no longer \'etale) degree $(p\pm 1)/2$ cyclic cover of the quotients:
\[
\left(R[x_1',1/x_1',x_2',\dots,x_n',y',\omega,T]/(\pi=\omega T)\right)^{\Gm} \subset \left(R[z,1/z,x_2',\dots,x_n',y',\omega,T]/(\pi=\omega T)\right)^{\Gm}.
\]
The ring of invariants of the finite cover can be computed to be
\[
R[\alpha_2,\dots,\alpha_n,\beta,\gamma,\zeta]/(\pi=\gamma \zeta)
\]
where
\[
\alpha_i=x_i'/z^{(p\pm 1)/2}, \beta = y'/z, \gamma=\omega/z,\text{ and }\zeta=z T.
\]

Define $\wbhat_{x_1}^\pm$ to be the spectrum of this ring of invariants. The ramification locus of
\[
\wbhat_{x_1}^\pm\ra \wb_{x_1}^\pm
\]
is the set of points in $\beta=\gamma=\zeta=0\subset \wbhat_{x_1}^\pm$. The map:
\[
\nuhat^\pm \cl \wbhat_{x_1}^\pm \ra \AA^{n+1}_R
\]
is given in coordinates by $(\zeta^{(p\pm 1)/2},\alpha_2 \zeta^{(p\pm1)/2},\dots,\alpha_n \zeta^{(p\pm1)/2},\beta\zeta).$ The preimage of the exceptional divisor of $\nu^\pm$ in $\wbhat_{x_1}^\pm$ is given by $\zeta=0$. The preimage of the strict transform of the central fiber is given by $\gamma=0$.

The $y'\ne 0$ chart, denoted $\wb_y^\pm$, is given by
\[
\wb_y^\pm:=\Spec\left(R[\alpha_1,\dots,\alpha_n,y,\gamma]/(\pi=y \gamma)\right)
\]
where $\alpha_i=x'_i/(y'^{(p\pm 1)/2})$, $y=y'T$, $\gamma=\omega/y'$. The map
\[
\nu_y^\pm \cl \wb_y^\pm \ra \AA^{n+1}_R
\]
is given in coordinates by $(\alpha_1 y^{(p \pm 1)/2},\dots,\alpha_n y^{(p \pm 1)/2},y)$. The exceptional divisor is $(y=0)$. The strict transform of the central fiber is $\gamma=0$.

The $\omega\ne 0$ chart, denoted $\wb_\omega^\pm$, is given by
\[
\wb_\omega^\pm:=\Spec\left(R[\alpha_1,\dots,\alpha_n,\beta]\right)
\]
The map
\[
\nu_\omega^\pm\cl \wb_\omega^\pm\ra \AA^{n+1}_R
\]
is given in coordinates by $(\alpha_1 \pi^{(p \pm 1)/2},\dots,\alpha_n \pi^{(p \pm 1)/2},\beta \pi)$. The exceptional divisor of $\nu_\omega^\pm$ is given by $\pi=0$ and the strict transform of the central fiber does not meet this chart.

Lastly, we need to define the weighted blowup of a smooth affine $R$-scheme. Assume $A$ is a finitely generated $R$-algebra and the map
\[
\Spec(A)\ra \Spec(R)
\]
is smooth. Let $q\in \Spec(A)$ be an $\Fbar$-point with maximal ideal defined by a regular sequence: $\mf=(\pi,x_1,\dots,x_n,y)$. Then (after possibly restricting to an open set) the map:
\[
\Spec(A)\ra \AA^{n+1}_R
\]
given in coordinates by $(x_1,\dots,x_n,y)$ is \'etale and there is only one point over $0$. The weighted blowup of $\Spec(A)$ with weights $((p\pm 1)/2,\dots,(p\pm 1)/2,1,1)$ at the maximal ideal $\mf$ with respect to the regular sequence $(\pi,x_1,\dots,x_n,y)$ is defined to be the fiber product:
\[
\begin{tikzcd}
\wb^\pm(A)\arrow[r]\arrow[d]& \wb^\pm\arrow[d] \\
\Spec(A)\arrow[r]& \AA^{n+1}_R.
\end{tikzcd}
\]
The map $\wb^\pm(A)\ra \wb^\pm$ is \'etale and the above charts give rise to charts for the weighted blowup of $\Spec(A)$ by base change.

\subsection{Resolving in mixed characteristic \texorpdfstring{$p$}{\texttwoinferior}}\label{subsection:resolveCharP}

The goal is to show that a hypersurface singularity of the form
\[
0=y^p+py^2( \delta^2 y^{p-3}+\cdots+(p-1)\delta^{p-2}/2) + p y \delta^{p-1} + x_1^2+\cdots+x_n^2+pf_2+f_3\in S_R[y]
\]
admits separably uniruled modifications. We will alternate between the types of weighted blowups defined in the previous section. Recall that $k=\ord_\pi(p)$, thus $p/\pi^i\in R$ for $i\le k$. Assume that $(2p-2)s<k$. We first define two series of equations:
\begin{equation}\label{singEq}
\begin{aligned}
F_s=&y^p+y^2g(y)p/\pi^{(2p-4)s}+ \delta^{p-1} y p /\pi^{(2p-2)s} + x_1^2+\cdots+x_n^2+pf_2+f_3.\\
G_s=&\pi y^p+y^2g(y) p/\pi^{(2p-4)s+p-3}+ \delta^{p-1} p y /\pi^{(2p-2)s+p-2} + x_1^2+\cdots+x_n^2+pf_2+f_3.
\end{aligned}
\end{equation}
We say that a scheme has a singularity with equation $F_s$ (resp. $G_s$) if (a) locally it can be embedded in a smooth affine $R$-scheme $\Spec(A)$; (b) there is an $\Fbar$ point in $\Spec(A)$ and a regular sequence $(\pi,x_1,\dots,x_n,y)$ such that the scheme is defined by $F_s=0$ (resp. $G_s=0$) where $\delta\in R$ is a unit, $g(y)\in R[y]$ is a polynomial in $y$, $f_2$ is a quadratic homogeneous polynomial in the $x_i$, and $f_3\in (x_1,\dots,x_n)^3\subset A$.

\begin{proposition}\label{prop:blowUpCases}
Assume that $(2p-2)s<k$.
\begin{enumerate}
\item The weighted blowup $\wb^-$ of a singularity with equation $F_s$ has 2 types of singularities: (a) a (non-isolated) cyclic quotient singularity with cyclic group of order $(p-1)/2$ and (b) an isolated singularity with equation $G_s$.
\item If $(2p-2)(s+1)<k$ then the weighted blowup $\wb^+$ of a singularity with equation $G_s$ has 2 types of singularities: (a) a (non-isolated) cyclic quotient singularity with cyclic group of order $(p+1)/2$ and (b) an isolated singularity with equation $F_{s+1}$.
\item If $(2p-2)(s+1)=k$ then the weighted blowup $\wb^+$ of a singularity with equation $G_s$ has only cyclic quotient singularities with cyclic group of order $(p+1)/2$.
\item All the exceptional divisors are ruled.
\end{enumerate}
\end{proposition}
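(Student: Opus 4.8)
The plan is to verify the proposition one affine chart at a time, using the explicit charts of $\wb^\pm$ built in the previous subsection and directly substituting the blowup coordinates into the equations $F_s,G_s$ of \eqref{singEq}. The one numerical fact I would use throughout is that $k=\ord_\pi(p)$ is divisible by $2(p-1)$ and $(2p-2)s<k$, so that $k-(2p-2)s\ge 2p-2$. Assigning $x_i$ the weight $(p\pm1)/2$ and $y,\pi$ the weight $1$, this inequality shows that for $F_s$ the quadric $x_1^2+\cdots+x_n^2$ (weight $p-1$) is the strictly unique lowest-weight term, whereas for $G_s$ the two terms $x_1^2+\cdots+x_n^2$ and $\pi y^p$ (both of weight $p+1$) are the lowest; every other monomial is pushed to strictly higher weight precisely by this estimate. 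This pins down the tangent cones and hence the exceptional divisors.

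First I would treat the $x_i$-charts, which account for alternative (a) in each case. Pulling the quadric back to $\wbhat^\pm_{x_1}$ via $\nuhat^\pm$ turns $x_1^2+\cdots+x_n^2$ into $\zeta^{\,p\pm 1}(1+\alpha_2^2+\cdots+\alpha_n^2)$, i.e. the exceptional parameter times a unit near the center; after dividing it out the strict transform is smooth on $\wbhat^\pm_{x_1}$. Hence every singularity of $\wb^\pm_{x_1}$ comes from the degree $(p\pm1)/2$ cyclic cover $\wbhat^\pm_{x_1}\ra\wb^\pm_{x_1}$, and since its ramification locus $\{\beta=\gamma=\zeta=0\}$ is positive-dimensional this produces exactly the claimed non-isolated cyclic quotient singularity of order $(p\pm1)/2$.

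Next I would carry out the substitution on the $\omega$-chart $\wb^\pm_\omega=\Spec R[\alpha_1,\dots,\alpha_n,\beta]$, where $x_i=\alpha_i\pi^{(p\pm1)/2}$ and $y=\beta\pi$; this is the chart carrying the new isolated singularity. Writing $p=\pi^k u$ and factoring $\pi^{\,p-1}$ (resp. $\pi^{\,p+1}$) out of the pullback of $F_s$ (resp. $G_s$), every $\pi$-exponent transforms exactly according to the prescribed shift — the bookkeeping reduces to the identities $-2p+4=-(2p-4)$ and $-2p+2=-(2p-2)$ — and one recovers verbatim the equation $G_s$ in part (1) and $F_{s+1}$ in part (2), with $u$ absorbed into the coefficients and $f_3$ landing in $(\alpha_1,\dots,\alpha_n)^3$ as required. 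In the boundary case $(2p-2)(s+1)=k$ of part (3) the coefficient $p/\pi^{(2p-2)(s+1)}=u$ is a unit, so the term $\delta^{p-1}y\,p/\pi^{(2p-2)(s+1)}$ becomes the linear term $u\delta^{p-1}\beta$; this makes the strict transform smooth, so only the quotient singularity of alternative (a) survives. It remains to rule out singularities on the $y$-chart: there $\wb^\pm_y=\Spec R[\alpha,y,\gamma]/(\pi-y\gamma)$ is regular, and after factoring the exceptional parameter the monomial $y^p$ (resp. $\pi y^p$) contributes a linear term in $y$ (resp. in $\gamma$) with unit coefficient, so the strict transform is smooth on this chart.

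For part (4), the exceptional divisor of each blowup is the zero locus of the leading form found above, and it suffices to check ruledness on the dense open given by the $\omega$-chart. There it is $\{x_1^2+\cdots+x_n^2=0\}$ for $\wb^-$, in which the $\beta$-coordinate is free, so this divisor is a cone over a smooth quadric times $\AA^1_\beta$ and is manifestly ruled; for $\wb^+$ it is $\{x_1^2+\cdots+x_n^2+\beta^p=0\}$, and projecting to $\beta$ realizes it as a quadric bundle over the rational base $\AA^1_\beta$ whose generic fiber, a smooth quadric over the $C_1$ field $\Fbar(\beta)$, is rational by Tsen's theorem, so it too is ruled. As $p$ is odd the quadric is nondegenerate, and the ruling comes from the (separable) quadric rather than from the inseparable $\beta^p$-direction, so each exceptional divisor is in fact separably uniruled; the exceptional divisors produced when one afterwards resolves the cyclic quotient singularities are toric, hence ruled by Lemma~\ref{etaleToric}. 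The main obstacle is the $\pi$-exponent bookkeeping in the $\omega$-chart: one must track the powers of $\pi$ through the substitution with care to confirm that the transformed hypersurface is $G_s$, resp. $F_{s+1}$, on the nose; once the leading form is pinned down, the remaining chart computations are routine smoothness checks.
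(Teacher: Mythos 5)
Your proposal is correct and follows essentially the same chart-by-chart verification as the paper: the isolated singularity and the $\pi$-exponent bookkeeping live in the $\omega$-chart, the order-$(p\pm1)/2$ quotient singularities come from the cyclic cover $\wbhat^\pm_{x_1}\ra\wb^\pm_{x_1}$ whose source is regular along the exceptional locus, and the $y$-chart is handled by the unit linear term in $y$ (resp.\ $\gamma$). The only real divergence is part (4), where the paper reads off rationality from the $y$-chart equations (in case (3) by projecting the homogenization of $\gamma+\tau\gamma^p+\alpha_1^2+\cdots+\alpha_n^2$ from a point of multiplicity $p-1$), whereas you fiber the $\omega$-chart divisor over $\AA^1_\beta$ and invoke Tsen's theorem; both arguments work.
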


\begin{proof}
We check this in our affine charts. The isolated singularity will show up in the affine chart $\wb^\pm_\omega$, the cyclic singularities will show up in the $\wb^\pm_{x_i}$-charts, and ruledness of exceptional divisors will be checked in the charts $\wb^\pm_y$.

We start by looking at the $\wb^\pm_\omega$-charts. By definition we are looking at a hypersurface in the affine scheme:
\[
\wb^\pm_\omega(A)=\Spec(A\otimes_{R[x_1,\dots,x_n,y]} R[\alpha_1,\dots,\alpha_n,\beta]).
\]
This is an \'etale cover of $\Spec(R[\alpha_1,\dots,\alpha_n,\beta])\cong \AA^{n+1}_R$. We compute the pullbacks of $F_s$.
\begin{align*}
(\nu^-_\omega)^*(F_s)=(\beta \pi)^p+&(\beta \pi)^2g(\pi\beta)p/\pi^{(2p-4)s} + \beta \pi \delta^{p-1}p/\pi^{(2p-2)s}  \\
&+ \pi^{p-1}(\alpha_1^2+\cdots+\alpha_n^2)+p\pi^{p-1}f_2+\pi^{3(p-1)/2}f'_3.
\end{align*}
where $f'_3\in (\alpha_1,\dots,\alpha_n)^3$. The strict transform of $F_s=0$ is given by dividing by $1/\pi^{p-1}$. This gives the equation:
\begin{align*}
(\nu^-)^*(F_s)/\pi^{p-1}=\pi \beta^p+&\beta^2g(\pi \beta)p/\pi^{(2p-4)s+p-3}+ \beta \delta^{p-1}4/\pi^{(2p-2)s+p-2}\\
& + \alpha_1^2+\cdots+\alpha_n^2+pf_2(\alpha_1,\dots,\alpha_n)+\pi^{(p-1)/2}f'_3.
\end{align*}
which is a singularity with equation $G_s$. Now we check that $(\pi,\alpha_1,\dots,\alpha_n,\beta)$ is the only singular point in this chart. We start by restricting to the exceptional divisor $(\pi=0)$. This gives the equation:
\[
\alpha_1^2+\cdots+\alpha_n^2=0\subset \AA^{n+1}_\Fbar.
\]
This is smooth away from points of the form $\mf=(\pi,\alpha_1,\dots,\alpha_n,\beta+\tau)$ (for some $\tau\in R$). Now we need to check regularity when $\tau$ is a unit in $R$. As $\tau$ is a unit:
\[
(\nu^-)^*(F_s)/\pi^{p-1} = \pi(-\tau)^p\ne0\in \mf/\mf^2
\]
so $(\nu^-)^*(F_s)/\pi^{p-1}$ is regular away from the closed point $(\pi,\alpha_1,\dots,\alpha_n,\beta)$.

A similar calculation (changing appropriate $-$ signs to $+$ signs) applies for part (2). In part (3), the strict transform has equation:
\begin{align*}
(\nu^+)^*(G_s)/\pi^{p+1}= \beta^p+&p\beta^2g(\pi\beta)/\pi^{(2p-4)k}+ p \beta \delta^{p-1}/\pi^{(2p-2)(s+1)}\\
& + \alpha_1^2+\cdots+\alpha_n^2+pf_2(\alpha_1,\dots,\alpha_n)+\pi^{(p+1)/2}f'_3.
\end{align*}
but now $\tau=p\delta^{p-1}/\pi^{(2p-2)(s+1)}$ is a unit in $R$. The exceptional divisor of the strict transform has equation:
\[
\beta^p+\tau\beta + \alpha_1^2+\dots+\alpha_n^2=0\subset \AA^{n+1}_\Fbar.
\]
The derivative $\partial/\partial\beta$ is never $0$ (as $\tau$ is a unit). Thus the exceptional divisor is a smooth Cartier divisor and the scheme is regular.

Now we examine the $\wb^\pm_y$-chart. This is defined as
\[
\wb^\pm_y(A)=\Spec\left(A\otimes_{R[\xs,y]} R[\alpha_1,\dots,\alpha_n,y,\gamma]/(\pi=y\gamma)\right),
\]
and there is an \'etale map $\wb^\pm_y(A)\ra \wb^\pm_y$. The strict transform of $F_s$ is
\begin{align*}
(\nu^-_y)^*(F_s)/y^{p-1}=y+& g(y)p/(y\gamma)^{(2p-4)s}y^{p-3}+ p \delta^{p-1}/(y\gamma)^{(2p-2)s}y^{p-2} \\
& + \alpha_1^2+\cdots+\alpha_n^2+pf_2(\alpha_1,\dots,\alpha_n)+y^{(p-1)/2}f'_3.
\end{align*}
Restricting to the exceptional divisor $y=0$ gives the equation:
\[
\alpha_1^2+\cdots+\alpha_n^2=0\subset\AA^{n+1}_\Fbar.
\]
So the only place we need to check regularity is at points with maximal ideal
\[
\mf=(\pi,\alpha_1,\dots,\alpha_n,y,\gamma+\tau)\subset A\otimes_{R[\xs,y]} R[\alpha_1,\dots,\alpha_n,y,\gamma].
\]
To check regularity, we need to show that $(\nu^-_y)^*(F_s)/y^{p-1}$ and $\pi- y \gamma$ are $\Fbar$-independent in $\mf/\mf^2$. This is straightforward as $(\nu^-_y)^*(F_s)/y^{p-1}\equiv y \pmod{\mf^2}$.

A similar calculation (changing appropriate $-$ signs to $+$ signs) applies for part (2). In part (3), the strict transform has equation:
\begin{align*}
(\nu^+_y)^*(F_s)/y^{p+1}=\gamma+&g(y)p/(y\gamma)^{(2p-2)s+p-2}y^p+p\delta^{p-1}/(y\gamma)^{(2p-2)s+p-2}y^p\\
&+\alpha_1^2+\cdots+\alpha_n^2+pf_2(\alpha_1,\dots,\alpha_n)+y^{(p+1)/2}f_3'.
\end{align*}
Observe that
\[
p\delta^{p-1}/y^{(2p-2)(s+1)}\gamma^{(2p-2)s+p-2}=\tau \gamma^p\in R[\alpha_1,\dots,\alpha_n,y,\gamma]/(y\gamma=\pi)
\]
for some unit $\tau \in R$. Restricting to the exceptional divisor gives:
\[
\gamma+\tau\gamma^p+\alpha_1^2+\cdots+\alpha_n^2=0\subset \AA^{n+1}_\Fbar.
\]
This is smooth, thus the total space is regular.

Now we want to check ruledness. In part (1) the exceptional divisor has equation:
\[
\alpha_1^2+\cdots+\alpha_n^2=0\subset \AA^{n+1}_\Fbar
\]
which is rational (as it is a quadric). In part (2) the exceptional divisor has equation:
\[
\gamma+\alpha_1^2+\cdots+\alpha_n^2=0\subset \AA^{n+1}_\Fbar
\]
which is also rational. Finally in part (3) the exceptional divisor has equation:
\[
\gamma+\tau\gamma^p+\alpha_1^2+\cdots+\alpha_n^2=0\subset \AA^{n+1}_\Fbar.
\]
This is birational to a hypersurface with homogeneous equation:
\[
(\gamma \theta^{p-1}+\tau\gamma^p+\theta^{p-2}(\alpha_1^2+\cdots+\alpha_n^2)=0)\subset \PP^{n+1}_\Fbar
\]
Note that this is a degree $p$ hypersurface with points of multiplicity $p-1$ (any point where $\gamma=\theta=\alpha_1^2+\cdots+\alpha_n^2=0$). Projecting from any such point gives a degree 1 map onto $\PP^{n+1}_\Fbar$, thus these exceptional divisors are rational as well.

It just remains to study the chart $\wb^\pm_{x_1}$. There is a (branched) cyclic cover
\[
\wbhat^\pm_{x_1}(A)\ra \wb^\pm_{x_i}(A).
\]
with degree $(p\pm 1)/2$. By definition we have
\[
\wbhat^\pm_{x_1}(A):=\Spec\left(A\otimes_{R[x_1,\dots,x_n,y]}R[\alpha_2,\dots,\alpha_n,\beta,\gamma,\zeta]/(\pi=\gamma \zeta)\right).
\]
The preimage of the strict transform of $F_s$ in $\wbhat^-_{x_1}(A)$ is
\begin{align*}
(\nuhat^-_{x_1})^*F_s/\zeta^{p-1}=\zeta \beta^{p-1}&+\beta^2 g(\zeta \beta) p/(\gamma\zeta)^{(2p-4)s}\zeta^{p-3}+\delta^{p-1}\beta p/(\gamma\zeta)^{(2p-2)s}\zeta^{p-2}\\
&1+\alpha_2^2+\cdots+\alpha_n^2+p f(1,\alpha_2,\dots,\alpha_n)+\zeta^{(p-1)/2}f'_3.
\end{align*}
We want to show this equation defines a regular scheme in $\wbhat^-_{x_1}(A)$. It suffices to show the intersection with the preimage of the exceptional divisor $(\zeta=0)$ is smooth. This restriction has equation:
\[
1+\alpha_2^2+\cdots+\alpha_n^2=0\subset \AA^{n+1}_\Fbar
\]
which is smooth. It follows that the strict transform of $F_s=0$ has cyclic quotient singularities in the chart $\wb^-_{x_1}$. Part (2) and (3) follow by a similar calculation.
\end{proof}

\begin{lemma}\label{lem:unirulingDegreeExc}
Suppose that $V$ and $W$ are normal $R$-schemes of finite type and there is a finite map $V\ra W$ of degree $d$. Suppose that $V$ has ruled modifications. Then for any birational map $W'\ra W$ from a normal $R$-scheme $W'$, every exceptional divisor has a uniruling of degree $\le d$.
\end{lemma}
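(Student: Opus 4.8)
The plan is to reduce the statement about an arbitrary birational model $W' \to W$ to a statement about the finite cover $V \to W$, exploiting the hypothesis that $V$ has ruled modifications. The key observation is that uniruling degrees are controlled by \emph{pulling back the ruling along the finite map}. Concretely, given an exceptional divisor $E \subset W'$ over $W$, I want to produce a dominant, generically finite rational map from a product $Z \times \PP^1 \dashrightarrow E$ of degree at most $d$, and the natural source of such rulings is the preimage of $E$ in a suitable birational model of $V$.

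\textbf{Main construction.} First I would form the fiber product (or the normalization of the closure of the graph) to get a diagram relating $V$, $W$, $W'$, and a model $V'$ dominating both $V$ and the preimage of $W'$. Explicitly, let $V' \to V \times_W W'$ be the normalization of the dominant component, so that $V'$ is a normal $R$-scheme with a finite map $V' \to W'$ of degree $d$ (degree is preserved since $V' \to W'$ is the base change of $V \to W$ along the birational $W' \to W$, up to the dominant component). The divisor $E \subset W'$ pulls back to a divisor $\widetilde{E} \subset V'$, and the induced map $\widetilde{E} \to E$ is finite of degree at most $d$. Now I claim $\widetilde{E}$ (or rather an appropriate component) is an exceptional divisor of a birational modification of $V$: since $V' \dashrightarrow V$ is birational and $\widetilde{E}$ maps into the locus over which $W' \to W$ is exceptional, $\widetilde{E}$ is contracted by $V' \to V$. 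Because $V$ has ruled modifications, every exceptional divisor over $V$ is ruled, so $\widetilde{E}$ is ruled: there is a dominant generically finite degree-one rational map $B \times \PP^1 \dashrightarrow \widetilde{E}$.

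\textbf{Transporting the ruling.} Composing the ruling of $\widetilde{E}$ with the finite map $\widetilde{E} \to E$ gives a dominant generically finite rational map
\[
B \times \PP^1 \dashrightarrow \widetilde{E} \dashrightarrow E
\]
whose degree is the product of the degree of the ruling (which is $1$) and the degree of $\widetilde{E} \to E$ (which is at most $d$). Hence $E$ has a uniruling of degree at most $d$, as desired. The only care needed is that $E$ is genuinely dominated by $\widetilde{E}$ and that I track the correct component: if $V' \to W'$ has several components over a generic point of $E$, I select the one that appears in the degree count, and the total degree $\widetilde{E} \to E$ summed over components is bounded by $d = \deg(V \to W)$, so at least one component gives a uniruling of degree $\le d$.

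\textbf{Expected obstacle.} I expect the main subtlety to be ensuring that $\widetilde{E}$ really is an exceptional divisor of a \emph{normal} birational modification of $V$ to which Definition~\ref{ruledMod} applies, rather than merely a divisor on some model. One must check that $\widetilde{E}$ is contracted in $V$ (i.e.\ that $E$ lies over the exceptional locus of $W' \to W$, which is where uniruledness can fail to be automatic) and that passing to normalizations does not destroy the degree bound; controlling how the finite degree $d$ distributes across the components of $\widetilde{E}$ lying over $E$, and confirming that ruledness of each such component survives the finite pushforward to $E$, is the technical heart of the argument.
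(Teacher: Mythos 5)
Your proposal is correct and follows essentially the same route as the paper: the paper also passes to the normalization $V'$ of the (closure of the graph of) $V\dashrightarrow W'$, i.e.\ the dominant component of $V\times_W W'$, notes that some exceptional divisor of $V'\to V$ dominates $E$ with degree $\le d$, and uses that it is ruled to produce the uniruling. Your additional bookkeeping about components and about why the preimage of $E$ is genuinely contracted over $V$ fills in details the paper leaves implicit.
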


\begin{proof}
Let $V'$ be the normalization of the closure of the graph of $V\dra W'$. There is some exceptional divisor of $V'\ra V$ (that is ruled) which dominates $E$ by a degree $\le d$ map.
\end{proof}

\begin{proof}[Proof of Theorem C in char. p>2]
We need to show that $Y_R$ admits separably uniruled modifications. By Proposition~\ref{prop:blowUpCases}, there is a birational model $Y'_R\ra R$ with ruled exceptional divisors such that $Y'_R$ only has cyclic quotient singularities by cyclic groups of order $\le (p+1)/2$. Then by Lemma~\ref{lem:unirulingDegreeExc} we are done, as any exceptional divisor over $Y_R$ is either exceptional over $Y'_R$ (which has separably uniruled modifications) or is exceptional for the map $Y'_R\ra Y_R$.
\end{proof}

\begin{proof}[Proof of Theorem C in char. 2]

In the central fiber at a singular point of the cyclic cover we have the equation:
\[
0=y^2+u+q+f_3\in (S/2)[y]
\]
where $S$ is a dimension $n+1$ regular local ring which is smooth over $\zhenstwo$ (and $2,\xs$ is a regular sequence), $u\in \Fbartwo$ is a unit, $q\in (S/2)[x_1,\dots,x_n]$ is a quadratic form with nondegenerate Hessian, and $f_3$ vanishes to order at least 3. As the characteristic is 2, $q$ having a nondegenerate Hessian implies that $n$ is even. By abuse of notation we lift this to an equation
\[
0=y^2+u+q+2f_1+f_3\in S[y]
\]
where $f_1$ (resp. $q$) is a linear (resp. quadratic) form in $\zhenstwo[\xs]$, $q/2$ has a nondegenerate Hessian, and $u\in \zhenstwo$ is a unit. Now base change along a finite extension $\zhenstwo\subset R$. After possibly extending so that $u$ has a square root $-\delta\in R$ we can make a change of coordinates to get the equation:
\[
0=y^2+(\delta y+f_1)2+ q +f_3\in S_R[y].
\]
where $\ord_\pi(2)=k$ for a uniformizer $\pi\in R$. Now consider ordinary blowup at the singular point $(\pi,\xs,y)$. The exceptional divisor of this blowup is given by
\[
y^2+q=0\subset \PP^{n+1}_\Fbartwo
\]
where the projective coordinates are $[\pi:x_1:\dots:x_n:y]$. This is only singular at the point $[1:0:\dots:0]$, and it follows that the blow up has a unique singularity of the form
\[
0=y^2+(\delta y+f_1)2/\pi+ q +f_3\in S_R[y]
\]
and the exceptional divisor is a quadric and thus ruled. Iterating the ordinary blowup brings us to the case that the exceptional divisor has the form:
\[
y^2+\delta y 2/\pi^{k-1}+f_1 2/\pi^{k-1} + q=0 \subset \PP^{n+1}_\Fbartwo.
\]
As $\delta y 2/\pi^{k-1}= \tau y\pi$ for some unit $\tau\in \Fbartwo$ the whole quadric has nondegenerate Hessian and is therefore smooth. So we see that iterating ordinary blowups of points resolves the singularities and the exceptional divisors are all quadrics in projective space and thus are ruled.
\end{proof}

\section{Endomorphisms in characteristic \texorpdfstring{$p$}{\texttwoinferior}}

The purpose of this section is to prove the nonexistence of separable rational endomorphisms of degree at least 2 for $p$-cyclic covers in characteristic $p$. We start by defining pull backs for rational endomorphisms.

\begin{definition}
Let $\Phi\cl Y\dra Y$ be a rational endomorphism of a proper, normal, $\QQ$-factorial variety over an algebraically closed field. Let $\Gamma_\Phi$ be the normalization of the closure of $\Phi$ in $Y\times Y$ with projections $p_1$ and $p_2$ of degree 1 and $\deg(\Phi)$ respectively. We define the pullback along $\Phi$:
\[
\Phi^*\cl \Pic(Y)\otimes \QQ \ra \Pic(Y)\otimes \QQ
\]
to be the composition:
\[
\Pic(Y)\otimes\QQ \xrightarrow{p_2^*}\Pic(\Gamma_\Phi)\otimes \QQ \hookrightarrow \Cl(\Gamma_\Phi)\otimes \QQ\xrightarrow{p_{1*}}\Cl(Y)\otimes \QQ \cong \Pic(Y)\otimes \QQ.
\]
\end{definition}

We would like to thank Nguyen-Bac Dang and John Lesieutre for suggesting the use of the Khovanskii-Teissier inequalities to prove the following proposition.

\begin{proposition}\label{prop:multPic}
Let
\[
\phi \colon Y \dashrightarrow Y
\]
be a rational endomorphism of degree $\lambda\ge 1$ of a normal $\QQ$-factorial projective variety $Y$ of dimension $n$ over an algebraically closed field of arbitrary characteristic. If $\Pic(Y)\otimes \QQ \cong \QQ$ then
\[
\phi^{\ast}\cl \Pic(Y)\otimes \QQ \ra \Pic(Y)\otimes \QQ
\]
is multiplication by $a$ where $a \geq \sqrt[n]{\lambda}$.
\end{proposition}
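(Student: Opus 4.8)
The plan is to transport the whole computation onto the normalized graph $\Gamma=\Gamma_\phi$ of $\phi$, where the two nef divisors $p_1^{*}H$ and $p_2^{*}H$ live, and then feed the resulting three intersection numbers into the Khovanskii--Teissier inequalities. Fix an ample divisor $H$ on $Y$; since $\Pic(Y)\otimes\QQ\cong\QQ$, its class generates, and by the definition of the pullback we have $p_{1*}(p_2^{*}H)=\phi^{*}H=aH$ in $\Cl(Y)\otimes\QQ\cong\Pic(Y)\otimes\QQ$. Set $D_1=p_1^{*}H$ and $D_2=p_2^{*}H$ on the normal projective variety $\Gamma$; both are nef, being pullbacks of an ample class along the morphisms $p_1,p_2$. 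Writing $s_i=D_1^{\,n-i}\cdot D_2^{\,i}$ for the intersection numbers on $\Gamma$, the degree formula for generically finite maps (using $\deg p_1=1$ and $\deg p_2=\lambda$) gives
\[
s_0=(p_1^{*}H)^n=H^n,\qquad s_n=(p_2^{*}H)^n=\lambda\,H^n.
\]
For the mixed term I would invoke the projection formula for $p_1$ together with the $\QQ$-factoriality of $Y$ (so that $p_{1*}(p_2^{*}H)$ is $\QQ$-Cartier and may be intersected with $H^{n-1}$):
\[
s_1=(p_1^{*}H)^{n-1}\cdot(p_2^{*}H)=H^{n-1}\cdot p_{1*}(p_2^{*}H)=H^{n-1}\cdot(aH)=a\,H^n.
\]

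With these three numbers in hand, the key input is the Khovanskii--Teissier inequality for the nef classes $D_1,D_2$, which asserts that the sequence $(s_i)_{0\le i\le n}$ is log-concave, i.e.\ $s_i^2\ge s_{i-1}s_{i+1}$. This holds over an algebraically closed field of arbitrary characteristic, since it reduces---by cutting down with general very ample hyperplane sections, which preserve nefness and the relevant intersection numbers---to the Hodge index inequality $(D_1\cdot D_2)^2\ge (D_1^2)(D_2^2)$ on a normal projective surface, and the Hodge index theorem is characteristic-free. I would first record that all $s_i$ are strictly positive: each is nonnegative as a product of nef classes, and if some intermediate $s_j$ vanished then log-concavity would force $s_i=0$ for all $i\ge j$, contradicting $s_n=\lambda H^n>0$ (here $\lambda\ge 1$ and $H^n>0$ because $H$ is ample).

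Given positivity, log-concavity of $(s_i)$ says that $i\mapsto\log s_i$ is concave on $\{0,\dots,n\}$, so it dominates the chord joining its endpoints; evaluating at $i=1$ yields
\[
\log s_1\ \ge\ \tfrac{n-1}{n}\log s_0+\tfrac1n\log s_n,
\qquad\text{i.e.}\qquad
s_1\ \ge\ s_0^{\,(n-1)/n}\,s_n^{\,1/n}.
\]
Substituting the computed values gives $a\,H^n\ge (H^n)^{(n-1)/n}(\lambda H^n)^{1/n}=\lambda^{1/n}H^n$, and dividing by $H^n>0$ produces $a\ge\sqrt[n]{\lambda}$, as desired.

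The step I expect to be the main obstacle is twofold: ensuring the Khovanskii--Teissier inequalities are genuinely available in arbitrary characteristic (which I would justify through the reduction to the surface Hodge index theorem sketched above, rather than any Hodge-theoretic argument), and verifying that the intersection-theoretic manipulations---especially the projection-formula identity $s_1=a\,H^n$ and the strict positivity of the $s_i$---are legitimate on the singular but normal, $\QQ$-factorial varieties $Y$ and $\Gamma$. Everything past that is the short concavity estimate above.
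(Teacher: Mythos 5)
Your proposal is correct and follows essentially the same route as the paper: form the normalized graph with its two projections of degrees $1$ and $\lambda$, pull back the ample generator $H$ along each, identify $(p_1^{*}H)^{n-1}\cdot p_2^{*}H = aH^n$ via the projection formula, and apply the Khovanskii--Teissier inequality $(H_1^{n-1}\cdot H_2)^n \ge (H_1^n)^{n-1}(H_2^n)$ to conclude $a\ge\sqrt[n]{\lambda}$. Your derivation of that inequality from log-concavity of the full sequence $(s_i)$, and your remarks on characteristic-freeness and positivity, are just expanded justifications of the single inequality the paper invokes directly.
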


\begin{proof}
Recall that for any 2 nef divisor classes $H_1$ and $H_2$ on a variety $Y$ we have the inequalities
\[
(H_1^i\cdot H_2^{n-i})^{n} \ge (H_1^n)^i \cdot (H_2^n)^{n-i}
\]
(see Corollary 1.6.3(ii) and Remark 1.6.5 of \cite{Lazarsfeld04} -- this is closely related to the log concavity of dynamical degrees). Let $\Gamma\ra Y\times Y$ be the graph of $\phi$ with projections $p_1\cl \Gamma\ra Y$ and $p_2\cl \Gamma\ra Y$ of degrees $1$ and $\lambda$ respectively. Let $H$ be an ample generator for $\Pic(Y)$. Set $H_i=p_i^*H$. Applying the above inequality for $i=n-1$ gives
\[
(H_1^{n-1}\cdot H_2)^n\ge (H_1^n)^{n-1} \cdot (H_2^n).
\]
Applying the projection formula to the left hand side gives
\[
(a H^n)^n\ge (H^n)^{n-1}\cdot \lambda (H^n),
\]
which reduces to $a\ge \sqrt[n]{\lambda}.$
\end{proof}

Now we recall Koll\'ar's result on the existence of holomorphic forms on cyclic covers in characteristic $p$. Let $X\subset \PP^{n+1}_\Fbar$ be a degree $e$ hypersurface and let $D=(s=0)\subset X$ be the divisor for some $s\in H^0(X,\Oc_X(pe))$. Assume that $s$ has nondegenerate critical points (\cite[17.3]{Kollar96}). Then the cyclic cover of $X$ branched at $D$ admits a resolution
\begin{equation}
\begin{tikzcd}
Z\arrow[r,"\mu"]\arrow[rr,swap,bend right=40,"\nu"] & Y\arrow[r, "\sigma"] &X,
\end{tikzcd}
\end{equation}
obtained by blowing up finitely many isolated singular points. When $n\ge 3$ there is an injection:
\[
\Mc:=\nu^*\left(\Oc_X(pe+e-n-2)\right)\hookrightarrow \Omega_Z^{n-1}.
\]

\begin{remark}\label{rem:degNumerics}
If we write $d=pe+f$ where $0 \leq f \leq p-1$, then $d\ge p\lceil (n+3)/(p+1)\rceil$ if and only if $pe+e-n-2>0$. Indeed, we have
\[ e+\frac{f}{p} \ge \left\lceil \frac{n+3}{p+1} \right\rceil \iff e\ge \left\lceil \frac{n+3}{p+1} \right\rceil \iff e\ge \frac{n+3}{p+1} \iff pe+e-n-2>0.\]
\end{remark}

\begin{lemma}\label{lem:YisQfactorial}
For $n\ge 3$ we have $\Pic(Y)\otimes \QQ \cong \Cl(Y)\otimes \QQ \cong \QQ$; in particular $Y$ is $\QQ$-factorial.
\end{lemma}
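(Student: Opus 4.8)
The plan is to reduce the entire statement to the single claim that $\Cl(Y)\otimes\QQ\cong\QQ$, after which both the $\Pic$ assertion and $\QQ$-factoriality follow formally. By the local analysis of the singularities in \S2.1, each singular point of $Y$ is an isolated hypersurface singularity of the shape $y^p+x_1^2+\cdots+x_n^2+(\text{higher order})$, so $\mathrm{Sing}(Y)$ is finite, $Y$ is normal, and $\mathrm{Sing}(Y)$ has codimension $n\ge 3$. Since the class group is unchanged by removing a closed subset of codimension $\ge 2$, restriction gives $\Cl(Y)\cong\Cl(Y^\circ)=\Pic(Y^\circ)$, where $Y^\circ=Y\setminus\mathrm{Sing}(Y)$ is smooth. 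Now the natural map $\Pic(Y)\otimes\QQ\ra\Cl(Y)\otimes\QQ$ is injective, and its image is nonzero because it contains the class of $\sigma^*\Oc_X(1)$, which is ample (a finite pullback of an ample bundle) and hence nontorsion. Thus, as soon as $\Cl(Y)\otimes\QQ$ is one-dimensional, an injection of the nonzero space $\Pic(Y)\otimes\QQ$ into it is forced to be an isomorphism; this yields $\Pic(Y)\otimes\QQ\cong\Cl(Y)\otimes\QQ\cong\QQ$ and, the natural map being surjective, shows $Y$ is $\QQ$-factorial. So $\QQ$-factoriality is automatic and the real content is the rank-one statement.

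To prove $\Cl(Y)\otimes\QQ\cong\QQ$ I would invoke a Lefschetz-type theorem. The lower bound is immediate from the ample class $\sigma^*\Oc_X(1)$. For the upper bound, I would realize $Y$ as the codimension-two complete intersection $\{F=0,\ y^p=s\}$ in the weighted projective space $\PP(1^{n+2},e)$ (with $\deg x_i=1$ and $\deg y=e$), whose divisor class group is $\ZZ\cdot\Oc(1)$. Since $Y$ is Cohen--Macaulay of dimension $n\ge 3$ with only isolated singularities, a Grothendieck--Lefschetz theorem for divisor class groups (in the spirit of SGA 2) should give that the restriction $\Cl\bigl(\PP(1^{n+2},e)\bigr)\ra\Cl(Y)$ is an isomorphism, so that $\Cl(Y)\cong\ZZ$ is generated by $\Oc_Y(1)=\sigma^*\Oc_X(1)$.

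An alternative route, using only objects already constructed, is to pass to the smooth projective resolution $\nu\cl Z\ra X$. Pushing forward Weil divisors gives $\Cl(Y)\otimes\QQ\cong(\Pic(Z)\otimes\QQ)/\langle[E_i]\rangle$, where the $E_i$ are the (rational, quadric) exceptional divisors. As $H^1_{\mathrm{et}}(Z,\QQ_\ell)=0$ the group $\Pic^0(Z)$ is trivial, so the cycle class map embeds $\Pic(Z)\otimes\QQ$ into $H^2_{\mathrm{et}}(Z,\QQ_\ell(1))$, and it suffices to compute $H^2_{\mathrm{et}}(Z)$. By weak Lefschetz $H^2_{\mathrm{et}}(X,\QQ_\ell)=\QQ_\ell$ for $n\ge 3$, and the $p$-cyclic cover contributes new cohomology only in the middle degree $n$; tracking the exceptional classes of $\nu$ then shows $\Pic(Z)\otimes\QQ$ is spanned by $\nu^*\Oc_X(1)$ together with the $[E_i]$, so the quotient is one-dimensional. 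I expect the main obstacle to be precisely this upper bound: controlling the divisor class group of a \emph{singular} cyclic cover in characteristic $p$, i.e.\ ruling out unexpected classes. Making the weighted-projective Grothendieck--Lefschetz statement precise for a complete intersection with isolated singularities in a singular ambient space over $\Fbar$---or, equivalently, justifying the concentration of the vanishing cohomology of the cover in degree $n$---is the delicate step, while everything else (the reduction to rank one and the local description of the singularities) is formal or already in hand.
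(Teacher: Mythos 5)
Your reduction of the lemma to the single claim $\Cl(Y)\otimes\QQ\cong\QQ$ is correct, and it is essentially how $\QQ$-factoriality is extracted in any treatment. The gap is in the upper bound on $\Cl(Y)\otimes\QQ$, and it sits exactly where you suspected. The ``Grothendieck--Lefschetz theorem for divisor class groups'' you invoke for a complete intersection with isolated singularities rests on Grothendieck's parafactoriality theorem, which requires the local rings to have dimension $\ge 4$; it says nothing when $n=3$. In dimension $3$ the statement is genuinely false for general normal complete intersections with isolated hypersurface singularities: a nodal cubic or quartic threefold containing a plane (e.g.\ $x_0q_0+x_1q_1=0$ in $\PP^4$ for general quadrics $q_i$) is normal with finitely many ordinary double points, yet the plane is not $\QQ$-Cartier and $\Cl\otimes\QQ$ has rank $\ge 2$. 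So no purely local or Lefschetz-type argument on $Y$ alone can close this, and $n=3$ (the quintic threefold) is a case the paper needs. Your second route, via $H^2_{\mathrm{et}}(Z)$, can be made to work --- the key input you are missing is that $\sigma\cl Y\ra X$ is \emph{purely inseparable} in characteristic $p$, hence a universal homeomorphism, so $H^2_{\mathrm{et}}(Y,\QQ_\ell)\cong H^2_{\mathrm{et}}(X,\QQ_\ell)\cong\QQ_\ell$ by topological invariance of \'etale cohomology together with weak Lefschetz for $X$ --- but as written it defers precisely the hard step.

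The paper avoids all of this with a one-line transfer along the finite degree-$p$ map $\sigma\cl Y\ra X$ itself: since $X$ is a smooth hypersurface of dimension $\ge 3$, $\Cl(X)=\Pic(X)\cong\ZZ$ by the classical Grothendieck--Lefschetz theorem, and because $\sigma$ is finite of prime degree $p$ and purely inseparable there is a unique prime divisor of $Y$ over each prime divisor of $X$, so $\sigma^*\circ\sigma_*$ is multiplication by $p$ on $\Cl(Y)$. Hence $\sigma_*\otimes\QQ$ is injective and $\Cl(Y)\otimes\QQ$ embeds into $\Cl(X)\otimes\QQ\cong\QQ$; nontriviality comes from the ample class $\sigma^*\Oc_X(1)$ exactly as in your reduction. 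The moral is that the rank-one bound should be proved downstairs on the smooth $X$ and transported up the finite cover, rather than proved upstairs on the singular $Y$.
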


\begin{proof}
By \cite[IV.3]{Hartshorne70} the hypersurface $X\subset \PP_\Fbar^{n+1}$ satisfies $\Pic(X)\cong \ZZ$. The cyclic cover construction gives
\[
Y\subset \mathbb{O}(e)
\]
(the total space of $\Oc_X(e)$), and the composition
\[
\Pic(Y)\xrightarrow{\sigma_*}\Pic(X)\cong \ZZ\xrightarrow{\sigma^*} \Pic(Y)
\]
is multiplication by $p$. It follows that $\Pic(Y)\otimes \QQ\cong \Pic(X)\otimes \QQ\cong \QQ$.
\end{proof}

\begin{proposition}\label{prop:NoSep}
If $pe+e-n-2>0$ (i.e. if $\Mc$ is big and nef on $Z$) then any separable rational endomorphism $\phi\cl Y\dra Y$ has degree $0$ or $1$.
\end{proposition}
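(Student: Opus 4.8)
The plan is to adapt the classical argument that a variety of general type admits no rational self-map of degree $\ge 2$, replacing the canonical bundle by the big and nef sub-line-bundle $\Mc\hookrightarrow\Omega_Z^{n-1}$: a separable self-map of degree $\lambda\ge 2$ would, under iteration, force subsheaves of $\Omega^{n-1}$ of unbounded positivity, which is impossible. Concretely, I first reduce to the dominant case (if $\phi$ is not dominant it has degree $0$), so I assume $\phi$ is separable and dominant of degree $\lambda\ge 1$ and show $\lambda=1$. By Lemma~\ref{lem:YisQfactorial}, $Y$ is normal and $\QQ$-factorial with $\Pic(Y)\otimes\QQ\cong\Cl(Y)\otimes\QQ\cong\QQ$, generated by the ample class $H:=\sigma^*\Oc_X(1)$ (ample as $\sigma$ is finite). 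Set $c:=pe+e-n-2>0$. Since $\mu\cl Z\to Y$ only modifies the isolated singular points, which have codimension $n\ge 3$, the inclusion $\Mc=c\,\mu^*H\hookrightarrow\Omega_Z^{n-1}$ restricts over $\mu^{-1}(Y_{\mathrm{sm}})\cong Y_{\mathrm{sm}}$ and descends (by reflexive hull) to an inclusion $c\,H\hookrightarrow\Omega_Y^{[n-1]}:=(\Omega_Y^{n-1})^{\vee\vee}$, i.e. a $cH$-valued $(n-1)$-form on $Y$ that is nonzero in codimension one.

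Next, for each $N\ge 1$ I would pull this form back along $\phi^N$, which is again dominant of degree $\lambda^N$ and separable (a tower of separable extensions is separable). Let $\gammabar_N$ be the normalization of the closure of the graph of $\phi^N$, with projections $p_1$ (birational) and $p_2$ (finite of degree $\lambda^N$, separable). Over the codimension-one locus where the varieties involved are smooth, separability of $p_2$ makes $\bigwedge^{n-1}dp_2$ generically an isomorphism, so pulling back the form gives a nonzero map $p_2^*(cH)\to\Omega_{\gammabar_N}^{[n-1]}$; saturating and extending reflexively produces a rank-one reflexive subsheaf $\Nc\hookrightarrow\Omega_{\gammabar_N}^{[n-1]}$ whose class can only grow under saturation, $c_1(\Nc)=c\,p_2^*H+B$ with $B\ge 0$ effective. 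Since $Y$ is normal, $p_1$ is an isomorphism in codimension one, so pushing forward yields a rank-one reflexive subsheaf $\overline{\Nc}\hookrightarrow\Omega_Y^{[n-1]}$. By $\QQ$-factoriality its class lies in $\Cl(Y)\otimes\QQ=\QQ H$, and using the definition $(\phi^N)^*=p_{1*}p_2^*$ together with Proposition~\ref{prop:multPic} applied to $\phi^N$,
\[
c_1(\overline{\Nc})=c\,(\phi^N)^*H+p_{1*}B=(c\,a_N+b)\,H,\qquad a_N\ge\sqrt[n]{\lambda^N}=\lambda^{N/n},\quad b\ge 0.
\]

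Finally I would intersect with $H^{n-1}$: on one hand $\mu_H(\overline{\Nc})=c_1(\overline{\Nc})\cdot H^{n-1}\ge c\,\lambda^{N/n}(H^n)$, while on the other hand $\overline{\Nc}$ is a subsheaf of the fixed torsion-free sheaf $\Omega_Y^{[n-1]}$, so $\mu_H(\overline{\Nc})\le\mu_{\max,H}(\Omega_Y^{[n-1]})=:C<\infty$ by the Harder--Narasimhan boundedness of subsheaves with respect to the ample class $H$. As $C$, $c>0$ and $H^n>0$ are independent of $N$, if $\lambda\ge 2$ then $\lambda^{N/n}\to\infty$ contradicts $c\,\lambda^{N/n}(H^n)\le C$; hence $\lambda=1$.

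\textbf{Main obstacle.} The delicate point is executing the pullback of forms honestly in characteristic $p$, where resolution of singularities is unavailable: I avoid it by working throughout with reflexive differentials on the normal models $Y$ and $\gammabar_N$ and comparing Weil-divisor classes only after pushing down to $Y$, where the ample generator $H$ lives. Two inputs are essential. First, the separability hypothesis on $\phi$ — hence on every $\phi^N$ — is exactly what forces $\bigwedge^{n-1}dp_2\ne 0$; an inseparable map could annihilate the pulled-back form and break the argument. Second, one must verify that the saturation corrections $B$ and $b$ are effective, so that the genuine lower bound $c\,\lambda^{N/n}(H^n)$ on the slope survives. The finiteness of $\mu_{\max,H}(\Omega_Y^{[n-1]})$ on the normal projective variety $Y$ is the remaining (standard) ingredient.
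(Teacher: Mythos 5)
Your argument is correct and follows essentially the same route as the paper: both proofs iterate the separable endomorphism, use separability to pull the big and nef subsheaf $\Mc\hookrightarrow\Omega^{n-1}$ back through the normalized graph, invoke Proposition~\ref{prop:multPic} together with Lemma~\ref{lem:YisQfactorial} to force the resulting rank-one subsheaf to have class at least $\lambda^{k/n}$ times an ample generator, and contradict a uniform upper bound on the $H^{n-1}$-degree of rank-one subsheaves of the sheaf of $(n-1)$-forms. The only differences are in the packaging: you work with reflexive differentials on $Y$ itself and cite boundedness of the maximal slope, where the paper works on the resolution $Z$ and bounds the supremum $S$ by embedding $(\mu_*\mybigwedge^{n-1}\Omega_Z)^{\vee\vee}$ into a direct sum of powers of an ample line bundle.
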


\begin{proof}
The idea is that if $\deg(\phi)\ge 2$ then pulling back $\Mc$ under a large iterate $\phi^{\circ k}$ gives an arbitrarily positive line bundle inside $\mybigwedge^{n-1}\Omega_Z$, which is impossible.

Now let $H_Y$ be an ample line bundle on $Y$ and let $H_Z:=\mu^*H_Y$. We consider the following invariant:
\[
S := \sup \left\{ (\Lc \cdot H_{Z}^{n-1}) \mid \Lc \text{ is a line bundle on } Z \text{ and } \Lc \hookrightarrow \mybigwedge^{n-1}\Omega_{Z} \right\} \in \ZZ.
\]
As $\Mc\hookrightarrow \mybigwedge^{n-1}\Omega_Z$ we have $S>0$. We also claim that $S$ is bounded. Indeed, for any line bundle $\Lc \hookrightarrow \mybigwedge^{n-1}\Omega_{Z}$ we have
\[
(\mu_*\Lc)^{\vee\vee} \hookrightarrow (\mu_*\mybigwedge^{n-1}\Omega_{Z})^{\vee\vee}.
\]
The sheaf $(\mu_*\Lc)^{\vee\vee}$ is a rank one reflexive sheaf on $Y$. By ampleness there exists some $N_1>0$, $N_2>0$ such that
\[
(\mu_*\mybigwedge^{n-1}\Omega_{Z})^{\vee\vee}\hookrightarrow ((H_Y)^{\otimes N_1})^{\oplus N_2}.
\]
It follows that $(\mu_*\Lc)^{\vee\vee}$ injects into one of the factors $H_Y^{\otimes N_1}$ which bounds the degree $(\mu_*\Lc)^{\vee\vee}\cdot H_Y^{n-1}$. Lastly, applying projection formula gives:
\[
(\mu_*\Lc)^{\vee\vee}\cdot H_Y^{n-1}=\Lc\cdot H_Z^{n-1}.
\]
Therefore, $S$ is bounded.

Now assume for contradiction that there is a separable rational endomorphism $\phi$ of $Y$ with $\deg(\phi)=\lambda\ge 2$. Let $\Gamma_Y$ (resp. $\Gamma_Z$) be the normalization of the graph of $\phi^{\circ k} \colon Y \dra Y$. Then we have a diagram:
\begin{center}
\begin{tikzcd}[row sep=small]
& \Gamma_{Z} \arrow[ld, swap, "\psi_{1}"] \arrow[rd, "\psi_{2}"] \arrow[dd, "\gamma"] & \\
Z \arrow[dd, swap, "\mu"] & & Z \arrow[dd, "\mu"] \\
& \Gamma_{Y} \arrow[ld, "\pi_{1}"] \arrow[rd, swap, "\pi_{2}"] & \\
Y\arrow[rr,dashed,swap,"\phi^{\circ k}"] & & Y.
\end{tikzcd}
\end{center}
Let $\phi_Z^{\circ k}$ denote the rational endomorphism of $Z$. This gives rise to a diagram of the associated Picard groups (and divisor class groups).
\begin{equation}\label{picCommutes}
\begin{tikzcd}
\Pic(Y)\otimes \QQ \arrow[r,"\pi_2^*"]\arrow[d,"\mu^*"] & \Pic(\Gamma_Y)\otimes\QQ \arrow[r,hook]
\arrow[d,"\gamma^*"]& \Cl(\Gamma_Y)\otimes \QQ\arrow[r,"\pi_{1*}"] & \Pic(Y)\otimes \QQ\\
\Pic(Z)\otimes \QQ\arrow[r,"\psi_2^*"] & \Pic(\Gamma_Z)\otimes \QQ\arrow[r,hook] & \Cl(\Gamma_Z)\otimes \QQ\arrow[r,"\psi_{1*}"]\arrow[u,"\gamma_*"] & \Pic(Z)\otimes \QQ\arrow[u,"\mu_*"]
\end{tikzcd}
\end{equation}
This diagram is commutative (the left and right squares commute by functoriality and the central square commutes as $\gamma$ is birational). The composition of the top row is $(\phi^{\circ k})^{\ast}$ and the composition of the bottom row is $(\phi^{\circ k}_{Z})^{\ast}$.

Let $\Phi$ be any positive degree separable rational endomorphism $\Phi\cl Z\dra Z$ of $Z$ and let $\Gamma_\Phi$ be the normalization of the graph of $\Phi$ in $Z\times Z$. For any line bundle $\Lc \subset \mybigwedge^{n-1}\Omega_Z$ there are injections:
\[
p_2^*\Lc \hookrightarrow p_2^*\mybigwedge^{n-1}\Omega_Z \hookrightarrow \mybigwedge^{n-1}\Omega_{\Gamma_\Phi}.
\]
and the composition is injective. Pushing forward this uniquely defines an injection:
\[
\left(p_{1*}(p_2^*\Lc)\right)^{\vee\vee} \hookrightarrow \left(p_{2*}\left(\mybigwedge^{n-1}\Omega_{\Gamma_\Phi}\right) \right)^{\vee\vee}\cong \mybigwedge^{n-1}\Omega_Z.
\]
(As both sheaves are $S_2$ the last isomorphism can be checked away from the codimension $\ge 2$ locus where $p_1^{-1}$ is undefined.) We also have that $\left(p_{1*}(p_2^*\Lc)\right)^{\vee\vee}= \Phi^*(\Lc)\in \Pic(W)\otimes \QQ$.

Thus for each $k>0$ there is an injection
\[
(\phi^{\circ k}_{Z})^{\ast}\Mc \hookrightarrow \mybigwedge^{n-1} \Omega_{Z},
\]
and we know $\Mc=\mu^*\Nc$ for some ample line bundle $\Nc$ on $Y$. By commutativity of the outer arrows of diagram~\ref{picCommutes},
\begin{align*}
S\ge (\phi^{\circ k}_{Z})^{\ast}\Mc \cdot H_{Z}^{n-1} &= \mu_{\ast}
\left ((\phi^{\circ k}_{Z})^{\ast} \mu^{\ast}\Nc \right)\cdot H_{Y}^{n-1} \\
&= (\phi^{\circ k}_{Y})^{\ast}\Nc \cdot H_{Y}^{n-1} \\
&= a\Nc \cdot H_{Y}^{n-1}
\end{align*}
for some constant $a \geq \sqrt[n]{\lambda^{k}}$ (by Proposition~\ref{prop:multPic} and Lemma~\ref{lem:YisQfactorial}). For $k \gg 0$, this is arbitrarily large, which is a contradiction.
\end{proof}

\section{Degrees of endomorphisms of complex hypersurfaces}

Let $X_\zhens\subset \PP^{n+1}_\zhens$ be a degree $e$ hypersurface that is smooth over $\zhens$ with $n\ge 3$. Let
\[
s\in H^0(X_\zhens,\Oc_{X_\zhens}(pe))
\]
be a section such that $D_\zhens=(s=0)\subset X_\zhens$ is smooth over $\zhens$ and such that 
\[
s|_{X_\Fbar}\in H^0(X_\Fbar,\Oc_{X_\Fbar}(pe))
\]
has nondegenerate critical points.

\begin{proposition}\label{prop:cyclicCoverRatEnd}
Let $Y_\CC$ be the $p$-cyclic cover of $X_\CC$ branched at $D_\CC$. If
\[
pe\ge p \lceil (n+3)/(p+1)\rceil
\]
and $\phi\cl Y_\CC\dra Y_\CC$ is any rational endomorphism of 
$Y_\CC$ of degree $\lambda$ then
\[
\lambda\equiv 0\text{ or }1 \pmod{p},
\]
and every uniruling degree of $Y_\CC$ is divisible by $p$.
\end{proposition}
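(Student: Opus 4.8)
The plan is to spread the whole picture out over $\zhens$ and then specialize a rational endomorphism to characteristic $p$ using Theorem~\ref{SpecializeRatEnd}(3) together with Proposition~\ref{prop:NoSep}. Let $Y_\zhens \to X_\zhens$ be the $p$-cyclic cover branched along $D_\zhens$, so that (for a fixed embedding $\zhens \hookrightarrow \CC$) its generic fibre base-changes to $Y_\CC$, while its special fibre is the characteristic-$p$ cyclic cover $Y_\Fbar$. Here $\Oc(D_\zhens) \cong \Oc_{X_\zhens}(e)^{\otimes p}$ and $D_\zhens$ is smooth over $\zhens$ with $s|_{X_\Fbar}$ having nondegenerate critical points, so Theorem~\ref{thm:SusRuledMod} applies and $Y_\zhens$ has sustained separably uniruled modifications. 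I would take $R = \zhens$, $\eta = \qhens$ (perfect, of characteristic $0$) and $\kappa = \Fbar$ (algebraically closed, of characteristic $p$) in Theorem~\ref{SpecializeRatEnd}(3).

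Before applying that theorem I would verify its remaining hypotheses. The total space $Y_\zhens$ is a hypersurface in the total space of $\Oc_{X_\zhens}(e)$, hence Cohen--Macaulay; it is smooth over $\qhens$, and because $s|_\Fbar$ has nondegenerate (hence isolated) critical points its only singularities are finitely many closed points of $Y_\Fbar$, which have codimension $\ge 2$. Thus $Y_\zhens$ is regular in codimension one and therefore normal. The special fibre $Y_\Fbar$ is cut out by $y^p - s|_\Fbar$, which is irreducible and reduced over the function field of $X_\Fbar$, so $Y_\Fbar$ is reduced and irreducible. The essential input is that $Y_\Fbar$ is not separably uniruled: since $pe \ge p\lceil(n+3)/(p+1)\rceil$ is equivalent to $pe+e-n-2>0$ by Remark~\ref{rem:degNumerics}, Koll\'ar's construction gives a big and nef line bundle $\Mc \hookrightarrow \mybigwedge^{n-1}\Omega_Z$ on a resolution $Z \to Y_\Fbar$. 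If $Z$ were separably uniruled it would carry a covering family of free rational curves $C_t$, on which every summand of $\mybigwedge^{n-1}\Omega_Z|_{C_t}$ has nonpositive degree, forcing $\Mc\cdot C_t \le 0$; this contradicts $\Mc \cdot C_t > 0$ for general $t$, which follows from bigness \cite{Kollar96}. Hence $Z$, and therefore $Y_\Fbar$, is not separably uniruled.

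Now given a rational endomorphism $\phi$ of $Y_\CC$ of degree $\lambda$, Lemma~\ref{lem:extendingRatEnd}(2) (applied to $\etabar \subset \CC$) produces a rational endomorphism of $Y_\etabar$ of degree $\lambda$, and Theorem~\ref{SpecializeRatEnd}(3) yields a rational endomorphism $\phi_\Fbar$ of $Y_\Fbar$ with $\lambda \equiv \deg(\phi_\Fbar) \pmod p$. To finish I would factor $\deg(\phi_\Fbar) = \deg_{\mathrm{sep}}(\phi_\Fbar)\cdot p^r$ into its separable and inseparable parts. If $r \ge 1$ then $\deg(\phi_\Fbar) \equiv 0 \pmod p$; if $r = 0$ then $\phi_\Fbar$ is separable, and since $Y_\Fbar$ is projective, normal, $\QQ$-factorial with $\Pic(Y_\Fbar) \otimes \QQ \cong \QQ$ (Lemma~\ref{lem:YisQfactorial}, using $n \ge 3$) and $pe+e-n-2>0$, Proposition~\ref{prop:NoSep} forces $\deg(\phi_\Fbar) \in \{0,1\}$. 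Either way $\deg(\phi_\Fbar) \equiv 0$ or $1 \pmod p$, so $\lambda \equiv 0$ or $1 \pmod p$. This passage from the separable statement of Proposition~\ref{prop:NoSep} to arbitrary endomorphisms in characteristic $p$, via the inseparable-degree factorization, is a small but indispensable step.

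For the uniruling degrees I would run the same specialization with the source replaced by a ruled variety. A dominant generically finite map $V_\CC \times \PP^1 \dra Y_\CC$ of degree $m$ descends to $V_\etabar \times \PP^1 \dra Y_\etabar$, which spreads out after a finite extension $\zhens \subset R$ to $g_R \cl W_R := V_R \times_R \PP^1 \dra Y_R$. Forming the normalization $\Gamma_R$ of the closure of the graph in $W_R \times_R Y_R$, flat over $R$, the second projection restricts to a degree-$m$ map $\Gamma_\Fbar \to Y_\Fbar$. The component of $\Gamma_\Fbar$ mapping birationally to $W_\Fbar = V_\Fbar \times \PP^1$ is ruled, and the remaining components are exceptional divisors over $W_R$, hence ruled once $W_R$ has ruled modifications, which holds when $V_R$ is regular (Example~\ref{ex:regSchRulMod}). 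Since $Y_\Fbar$ is not separably uniruled, each component that dominates $Y_\Fbar$ does so inseparably --- a separable dominant image of a separably uniruled variety would itself be separably uniruled --- so each contributes a multiple of $p$ to the degree, giving $p \mid m$; descending back shows every uniruling degree of $Y_\CC$ is divisible by $p$. The main obstacle I anticipate is precisely this last specialization: guaranteeing in mixed characteristic that $W_R$ has ruled modifications (equivalently, choosing the source model so that $V_R$ is regular), so that the exceptional components of $\Gamma_\Fbar$ are controlled and no degree escapes to uncounted components of the special fibre.
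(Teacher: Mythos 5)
For the congruence claim your argument is the paper's argument: spread out over $\zhens$, invoke Theorem~\ref{thm:SusRuledMod} for sustained separably uniruled modifications, specialize the endomorphism via Lemma~\ref{lem:extendingRatEnd} and Theorem~\ref{SpecializeRatEnd}(3), and kill separable endomorphisms of degree $\ge 2$ on $Y_\Fbar$ with Proposition~\ref{prop:NoSep}. Your explicit separable/inseparable degree factorization is exactly the step the paper leaves implicit, and your direct argument that $Y_\Fbar$ is not separably uniruled (free curves against the big and nef $\Mc\hookrightarrow\mybigwedge^{n-1}\Omega_Z$) simply replaces the paper's citation of \cite[Lem.~7]{Kollar95}. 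This half is correct and essentially identical to the paper.

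The uniruling-degree claim is where you diverge, and the obstacle you flag in your last sentence is a genuine gap, not a technicality. Your graph argument requires the spread-out source $W_R=V_R\times_R\PP^1_R$ to have a reduced special fibre with ruled components and to admit (sustained) ruled modifications; otherwise the degree $m$ can distribute onto components of $\Gamma_\Fbar$ that dominate nonreduced or non-ruled pieces of $W_\Fbar$ and that you cannot show are separably uniruled. You cannot simply ``choose the source model so that $V_R$ is regular'': $V$ is an arbitrary characteristic-zero variety, producing a regular model with good special fibre over a mixed-characteristic DVR is a semistable-reduction problem which is open in this generality, and the available substitute (alterations) replaces $V$ by a cover of some degree $\delta$ and only yields $p\mid \delta m$, which is useless unless one controls $\delta$ mod $p$. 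The paper does not attempt this route: its proof of Proposition~\ref{prop:cyclicCoverRatEnd} in fact says nothing about the uniruling statement, and where the divisibility of uniruling degrees is actually needed it is imported from Koll\'ar's specialization machinery for unirulings (\cite[Lem.~V.5.14.5]{Kollar96}), which works with the family of rational curves on $Y_R$ itself (via the space of rational curves over $R$ and ruled modifications of the target) rather than with a chosen model of the parametrizing variety $V$. To close your argument you should either cite that result or recast the specialization in terms of the curves on $Y$ rather than the uniruling parametrization.
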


\begin{proof}
By Lemma~\ref{lem:extendingRatEnd}, as $X_\CC$ and $Y_\CC$ are defined over $\qhens$, there is an endomorphism $\phi$ of $Y_\CC$ of degree $\lambda$ which is defined over a finite extension of $\qhens$. Theorem~\ref{thm:SusRuledMod} implies that $Y_\zhens$ has sustained separably uniruled modifications. By Proposition~\ref{prop:NoSep} any rational endomorphism of $Y_\Fbar$ of degree greater than 1 is inseparable. Thus the degree of any rational endomorphism of $Y_\Fbar$ is 0 or 1$\pmod{p}$. By \cite[Lem. 7]{Kollar95}, $Y_\Fbar$ is not separably uniruled. Thus by Theorem~\ref{SpecializeRatEnd} we have $\lambda\equiv 0\text{ or }1 \pmod{p}$.
\end{proof}

\begin{proof}[Proof of Theorem~\ref{mainThm}]
To start we prove the theorem when $d=pe$. By Mori's construction \cite{Mori75} (or see \cite[\S 5]{Kollar96}) there is a smooth family of complex varieties which contains all smooth hypersurfaces of degree $pe$ in $\PP^{n+1}_\CC$ and all smooth degree $p$ cyclic covers of smooth degree $e$ hypersurfaces in $\PP^{n+1}_\CC$. If $d\ge p\lceil (n+3)/(p+1)\rceil$ then by Proposition~\ref{prop:cyclicCoverRatEnd} there are $p$-cyclic covers of degree $e$ hypersurfaces such that every rational endomorphism has degree $\equiv$ 0 or 1$\pmod{p}$ and every uniruling degree is divisible by $p$. Thus, if $X$ is a very general hypersurface of degree $pe$, then every rational endomorphism has degree $\equiv 0$ or $1\pmod{p}$.

Now we consider the general case with $d=pe+f$. Consider a pencil of hypersurfaces of degree $d$ such that the general member is smooth and the central fiber is a union of hyperplanes and a very general hypersurface of degree $pe$ meeting with simple normal crossings. By the previous paragraph and Remark~\ref{rem:degNumerics} every rational endomorphism of the degree $pe$ hypersurface has degree $\equiv$ 0 or 1$\pmod{p}$. By \cite[Lem. V.5.14.5]{Kollar96} every uniruling degree of such a degree $pe$ hypersurface is divisible by $p$. Therefore, by Theorem~\ref{SpecializeRatEnd}(2) and Lemma~\ref{lem:extendingRatEnd} every rational endomorphism of a very general hypersurface of degree $d$ has degree $\equiv 0$ or $1\pmod{p}$.
\end{proof}

%
%

\bibliographystyle{siam} 
\bibliography{Biblio}

\begin{thebibliography}{10}

\bibitem{Abhyankar56}
{\sc S.~Abhyankar}, {\em On the valuations centered in a local domain},
  American Journal of Mathematics, 78 (1956), pp.~321--348.

\bibitem{BR19}
{\sc R.~Beheshti and E.~Riedl}, {\em Linear subspaces of hypersurfaces},
  arXiv:1903.02481,  (2019).

\bibitem{CS20}
{\sc N.~Chen and D.~Stapleton}, {\em Fano hypersurfaces with arbitrarily large
  degrees of irrationality}, Forum of Mathematics, Sigma, 8 (2020), pp.~e.24,
  12pp.

\bibitem{Dedieu09}
{\sc T.~Dedieu}, {\em Severi varieties and self-rational maps of {K}3
  surfaces}, International Journal of Mathematics, 20 (2009), pp.~1455--1477.

\bibitem{Grassi91}
{\sc A.~Grassi}, {\em On minimal models of elliptic threefolds}, Mathematische
  Annalen, 290 (1991), pp.~287--302.

\bibitem{GrassiWen19}
{\sc A.~Grassi and D.~Wen}, {\em Higher dimensional elliptic fibrations and
  {Z}ariski decompositions}, arXiv:1904.02779,  (2019).

\bibitem{Hartshorne70}
{\sc R.~Hartshorne}, {\em Ample subvarieties of algebraic varieties}, Springer,
  1970.

\bibitem{ILO14}
{\sc L.~Illusie, Y.~Laszlo, and F.~Orgogozo}, {\em Travaux de {G}abber sur
  l'uniformisation locale et la cohomologie {\'e}tale des sch{\'e}mas
  quasi-excellents. {S}{\'e}minaire {\`a} l'{\'e}cole polytechnique
  2006--2008}, Ast\'{e}risque, 363-364 (2014), p.~652pp.

\bibitem{KobOch75}
{\sc S.~Kobayashi and T.~Ochiai}, {\em Meromorphic mappings onto compact
  complex spaces of general type}, Inventiones mathematicae, 31 (1975),
  pp.~7--16.

\bibitem{Kollar95}
{\sc J.~Koll\'{a}r}, {\em Nonrational hypersurfaces}, J. Amer. Math. Soc., 8
  (1995), pp.~241--249.

\bibitem{Kollar96}
\leavevmode\vrule height 2pt depth -1.6pt width 23pt, {\em Rational curves on
  algebraic varieties}, vol.~32 of Ergebnisse der Mathematik und ihrer
  Grenzgebiete, Springer-Verlag, Berlin, 1996.

\bibitem{Lazarsfeld04}
{\sc R.~Lazarsfeld}, {\em Positivity in algebraic geometry {I}: {C}lassical
  setting: {L}ine bundles and linear series}, vol.~48, Springer, 2004.

\bibitem{Mori75}
{\sc S.~Mori}, {\em On a generalization of complete intersections}, J. Math.
  Kyoto Univ., 15 (1975), pp.~619--646.

\bibitem{Totaro}
{\sc B.~Totaro}, {\em Hypersurfaces that are not stably rational}, J. Amer.
  Math. Soc., 29 (2016), pp.~883--891.

\bibitem{Vial13}
{\sc C.~Vial}, {\em Algebraic cycles and fibrations}, Doc. Math., 18 (2013),
  pp.~1521--1553.

\end{thebibliography}

\footnotesize{
\textsc{Department of Mathematics, Stony Brook University, Stony Brook, New York 11794} \\
\indent \textit{E-mail address:} \href{mailto:nathan.chen@stonybrook.edu}{nathan.chen@stonybrook.edu}

\textsc{Department of Mathematics, University of California San Diego, La Jolla, California 92093} \\
\indent \textit{E-mail address:} \href{mailto:dstapleton@ucsd.edu}{dstapleton@ucsd.edu}
}

\end{document}